\newtheorem{theorem}{Theorem}[section]
\newtheorem{lemma}[theorem]{Lemma}
\newtheorem{corollary}[theorem]{Corollary}
\theoremstyle{definition}
\theoremstyle{remark}
\newtheorem{remark}[theorem]{Remark}
\numberwithin{equation}{section}
\newcommand{\strt}[1]{\rule{0pt}{#1}}
\newcommand{\Norm}[2]{\|#1\|_{#2}}  
\def\R{{\mathbb R}}
\def \Lpq{L^{p,q}(w)}
\def \d {\mathrm{d}}
\def \ds{\frac{\d s}{s}}
\def \dt{\frac{\d t}{t}}
\def\M {{\mathrm M}}
\newcommand{\RN }{\mathbb R^n}
\newcommand{\cR }{\mathcal R}
\def\Xint#1{\mathchoice
  {\XXint\displaystyle\textstyle{#1}}%
  {\XXint\textstyle\scriptstyle{#1}}%
  {\XXint\scriptstyle\scriptscriptstyle{#1}}%
  {\XXint\scriptscriptstyle\scriptscriptstyle{#1}}%
  \!\int}
\def\XXint#1#2#3{{\setbox0=\hbox{$#1{#2#3}{\int}$}
    \vcenter{\hbox{$#2#3$}}\kern-.5\wd0}}
\def\avgint{\Xint-}
\newcommand{\vertiii}[1]{{\left\vert\kern-0.25ex\left\vert\kern-0.25ex\left\vert #1 
    \right\vert\kern-0.25ex\right\vert\kern-0.25ex\right\vert}}
\numberwithin{equation}{section}
\begin{document}

 \title[  mixed $A_p-A_{\infty}$ estimate for maximal functions]{Weighted Lorentz spaces:  sharp mixed  $A_p-A_{\infty}$ estimate for maximal functions}

\author[N. Accomazzo]{Natalia Accomazzo}
\address[Natalia Accomazzo]{Department of Mathematics, The University of British Columbia, Room 121, 1984 Mathematics Road, Vancouver, BC, Canada V6T 1Z2}
 \email{naccomazzo@math.ubc.ca}

\author[J. Duoandikoetxea]{ Javier Duoandikoetxea}
\address[Javier Duoandikoetxea]{
Sopela, Basque Country, Spain} \email{javier.duoandikoetxea@ehu.eus}

\author[Z.Nieraeth]{Zoe Nieraeth}
\address[Zoe Nieraeth]{BCAM\textendash  Basque Center for Applied Mathematics, Bilbao, Spain}
\email{znieraeth@bcamath.org}

\author[S. Ombrosi]{Sheldy Ombrosi}
\address[Sheldy Ombrosi] {Departmento de Matem\'atica, Universidad Nacional del Sur,
Bah\'ia Blanca, Argentina}
\email{sombrosi@uns.edu.ar}

\author[C. P\'erez]{Carlos P\'erez}
\address[Carlos P\'erez]{ Department of Mathematics, University of the Basque Country, IKERBASQUE 
(Basque Foundation for Science) and
BCAM \textendash  Basque Center for Applied Mathematics, Bilbao, Spain}
\email{cperez@bcamath.org}

\thanks{N. A. is partially supported by NSERC Discovery grant.}
\thanks{Z. N. is supported by the Basque Government through the BERC 2018-2021 program and by the Spanish State Research Agency through BCAM Severo
Ochoa excellence accreditation SEV-2017-0718 and through project PID2020-113156GB-I00/AEI/10.13039/501100011033 funded by Agencia Estatal de
Investigación and acronym ``HAPDE''}
\thanks{S. O. is partially supported by Agencia I+d+i grant PICT 2018-2501}
\thanks{C. P. is supported by grant  PID2020-113156GB-I00, Spanish Government; by the Basque Government through grant IT1247-19 and the BERC 2014-2017 program  and by BCAM Severo Ochoa accreditation SEV-2013-0323, Spanish Government}

\subjclass[2010]{Primary: 42B25. Secondary: 42B20.}

\keywords{Maximal functions. Muckenhoupt weights. Lorentz spaces}

\begin{abstract}
We prove the sharp mixed $A_{p}-A_{\infty}$ weighted estimate for the Hardy-Littlewood maximal function in the context of weighted Lorentz spaces, namely
$$
\|\M\|_{\Lpq}
\lesssim_{p,q,n} [w]^{\frac1p}_{A_p}[\sigma]^{\frac1{\min(p,q)}}_{A_{\infty}},
$$
where $\sigma=w^{\frac{1}{1-p}}$. Our method is rearrangement free and can also be used to bound similar operators, even in the two-weight setting. We use this to also obtain new quantitative bounds for the strong maximal operator and for $\M$ in a dual setting.

\end{abstract}
\maketitle

\normalem


\section{Introduction}

We begin by introducing the most classical set up for our problem
 which is provided by the Hardy-Littlewood maximal operator  $\M$ 
\[\M f(x)=\sup_Q \avgint_Q |f(y)| \,\d y,\]
where the supremum is taken over all the cubes $Q$ in $\R^n$ that contain $x$, and the barred integral stands for the average on $Q$.

In the celebrated paper \cite{M}, B. Muckenhoupt characterized the class of weights for which the Hardy--Littlewood
maximal operator is bounded on $L^p(w)$; the surprisingly simple
necessary and sufficient condition is the renowned $A_{p}$
condition of Muckenhoupt, namely $\|\M\|_{L^p(w)}$ is finite if and only if the quantity
\begin{equation}\label{Ap}
[w]_{A_p}:=\sup_{Q}\left(\avgint_Qw(x)\,\d x\right)\left(\avgint_Q w(x)^{1-p'}\,\d x\right)^{p-1}  
\end{equation}
is finite. We will call this quantity the $A_{p}$ constant of the weight, although in the literature it's sometimes been referred as the ``characteristic'' or the ``norm'' of the weight. 

The first result linking the relationship between the operator norm
$\|\M\|_{L^p(w)}$ and the $A_p$ constant of the weight $w$ was obtained by S.~Buckley~\cite{buckley} as part of his Ph.D. thesis. Precisely, if $1<p<\infty$ and $w \in A_p$, then the Hardy-Littlewood maximal function
satisfies the following operator estimate:
\begin{equation}\label{buckley0}
\|\M\|_{L^p(w)}\leq C_{n,p}\,
[w]_{A_p}^{\frac{1}{p-1}}, \qquad  1<p<\infty.
\end{equation}
Moreover, the constant $C_{n,p}$ can be taken as $c_n p'$, where $c_n$ depends only on $n$. Furthermore the result is sharp in the
sense that for any $\epsilon>0$
\begin{equation}\label{buckley3}
\sup_{w\in A_{p}} \frac{1}{[w]_{A_{p}}^{\frac{1}{p-1}-\epsilon}}
\|\M\|_{L^p(w)} =\infty.
\end{equation}
The sharpness of the exponent was also showed by Buckley \cite{buckley} via a specific example. Using a different approach \cite{LPR15} that takes into account the norm behaviour of $\M$ at the endpoint:
\begin{equation*}
\|\M\|_{L^{p}(\R^n) } \approx \frac{1}{p-1},  \qquad p\to 1,
\end{equation*}
it was actually proven that we cannot replace the constant $[w]_{A_p}^{1/(p-1)}$ by any other $\varphi([w]_{A_p})$ with $\varphi$ a non-negative non-decreasing function with $\lim_{t\to\infty}\varphi(t)/t^{p-1}=0$. Unfortunately, the same approach cannot be translated into the case of the strong maximal function $\M_{\cR}$ which we discuss below.

In the same article that we've already mentioned \cite{buckley}, Buckley noted and made use of the fact that for $1\leq p <\infty$,
\begin{equation}\label{Weakp,p}
	    \Vert \M\Vert_{L^{p}(w)\to L^{p,\infty}(w)} \lesssim [w]_{A_p}^{1/p}.
\end{equation}
This result was implicit in the proof of Muckenhoupt in \cite{M}. Moreover it is a characterization of the $A_p$ constant since 
\begin{equation}\label{CharactApWeakp,p}
[w]_{A_p}^{\frac1p}  \leq   \Vert \M\Vert_{L^{p}(w)\to L^{p,\infty}(w)} \leq c_n\, [w]_{A_p}^{\frac1p} 
\end{equation}
This characterization holds true even in the two-weight context,
$$\|\M\|_{L^{p}(w)\to L^{p,\infty}(v)}\approx_n [v,w]_{A_p}^{\frac{1}{p}}.$$
Here, for two weights $v,w$, we say that $(v,w)\in A_p$ when
\[
[v,w]_{\strt{2ex}A_p}:=\sup_{Q}\left(\avgint_Qv(x)\,\d x\right)\left(\avgint_Q w(x)^{1-p'}\,\d x\right)^{p-1}<\infty  .
\]

As was mentioned above, the exponent in Buckley's estimate \eqref{buckley0} cannot be improved. However, \eqref{buckley0} can be improved by inserting another constant related to $w$, more precisely the $A_{\infty}$ constant of the ``dual weight" of $w$, namely $\sigma=w^{-1/(p-1)}$. This was done first in \cite{HP} and a simplified proof was given in  \cite{HPR}.

To define the $A_{{\infty}}$ class of weights a key observation is that the $A_{p}$ condition is decreasing in $p$, by this we mean that 
\begin{equation*}
[w]_{A_q}\leq [w]_{A_p},   \quad 1\leq p<q,
\end{equation*}
implying that  $A_{p}\subset A_{q}$.  Hence it is natural to define 
\begin{equation*}
A_{\infty} := \bigcup_{p>1} A_p.
\end{equation*}
The $A_{\infty}$ class of weights shares a lot of interesting
properties, we remit to \cite{Duo-Paseky} for more information. Although we find the definition of the class very natural, a definition for its corresponding constant is not so much and different attempts have been made through the literature. In \cite{HP} the authors propose to define the $A_\infty$ constant of a weight $w$ as 
 \begin{equation*}
[w]_{A_\infty}:= \sup_Q\frac{1}{w(Q)}\int_Q \M(w\chi_Q )\,\d x.
 \end{equation*}
This corresponds to a characterization of $A_\infty$ originally introduced by Fujii in \cite{fujii} and rediscovered by Wilson in \cite{Wilson:87}. Observe that $[w]_{A_\infty} \ge 1$ by the Lebesgue differentiation theorem. On the other hand, $[w]_{ A_\infty}\le C_n [w]_{ A_p}$ for $w\in A_p$ (see \cite{HP}). For more information about characterizations of $A_\infty$, see \cite{DMRO}.

Now we are in position to state the improvement of Buckley's theorem \eqref{buckley0} obtained in \cite{HP}.

\begin{theorem}\label{MixedApAinftyLp}   Let $1<p<\infty$ and let $\sigma=w^{1-p'}$, then 
\begin{equation}\label{HLLpAinfty}
  \|\M\|_{L^p(w)} \leq c_n\, \big(p'\,  [w]_{ A_p} [\sigma]_{ A_\infty}\big)^{1/p}.
\end{equation}
\end{theorem}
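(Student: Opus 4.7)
The plan is to linearize $\M$ by a sparse Calder\'on--Zygmund decomposition, pass to the dual weight, and then combine the $A_p$ condition with a Carleson embedding whose constant is controlled by $[\sigma]_{A_\infty}$.

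First, I would reduce to a dyadic maximal operator via the standard three-grid trick (at the cost of a dimensional constant) and perform the Calder\'on--Zygmund stopping-time selection at levels $2^k$: for each $k\in\mathbb{Z}$ let $\{Q_j^k\}_j$ be the maximal dyadic cubes with $\avgint_{Q_j^k}|f|>2^k$, set $\Omega_k=\bigsqcup_j Q_j^k$, and $E_j^k:=Q_j^k\setminus\Omega_{k+1}$. The $E_j^k$'s are pairwise disjoint with $|E_j^k|\ge\tfrac12|Q_j^k|$, so the family $\mathcal{S}=\{Q_j^k\}$ is sparse. Splitting the level sets of the dyadic maximal function and using $\avgint_{Q_j^k}|f|\approx 2^k$ yields the sparse control
$$
\|\M f\|_{L^p(w)}^p \lesssim_n \sum_{Q\in\mathcal{S}} \langle |f|\rangle_Q^p\,w(E_Q).
$$

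Next, I would substitute $f=g\sigma$, so that $\|f\|_{L^p(w)}^p=\|g\|_{L^p(\sigma)}^p$ because $\sigma^{1-p}=w$. Writing $\langle g\sigma\rangle_Q=\tfrac{\sigma(Q)}{|Q|}\langle g\rangle_Q^\sigma$, where $\langle\cdot\rangle_Q^\sigma:=\sigma(Q)^{-1}\int_Q\cdot\,\d\sigma$, the $A_p$ bound \eqref{Ap} gives
$$
\Bigl(\tfrac{\sigma(Q)}{|Q|}\Bigr)^{p} w(Q)\le [w]_{A_p}\,\sigma(Q),
$$
and since $w(E_Q)\le w(Q)$ the sparse sum collapses to
$$
\sum_{Q\in\mathcal{S}}\langle |f|\rangle_Q^p\,w(E_Q) \le [w]_{A_p}\sum_{Q\in\mathcal{S}}\bigl(\langle g\rangle_Q^\sigma\bigr)^p\sigma(Q).
$$

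The heart of the argument is to bound this remaining sum by $\lesssim p'\,[\sigma]_{A_\infty}\,\|g\|_{L^p(\sigma)}^p$ via Carleson embedding. The crucial observation, going back to Fujii and Wilson, is that sparseness feeds directly into the definition of $[\sigma]_{A_\infty}$: for any cube $R$,
$$
\sum_{\substack{Q\in\mathcal{S}\\ Q\subset R}}\sigma(Q)\le 2\sum_{Q\subset R}|E_Q|\,\langle\sigma\chi_R\rangle_Q\le 2\int_R\M(\sigma\chi_R)\,\d x\le 2[\sigma]_{A_\infty}\,\sigma(R),
$$
so $\mathcal{S}$ is a $\sigma$-Carleson family with constant $\lesssim[\sigma]_{A_\infty}$. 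The dyadic Carleson embedding on $L^p(\sigma)$, combined with Doob's inequality for the $\sigma$-weighted dyadic maximal operator, then closes the estimate, and recalling $\|g\|_{L^p(\sigma)}=\|f\|_{L^p(w)}$ finishes the proof.

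The main obstacle is extracting the sharp exponent $(p')^{1/p}$ stated in the theorem: the naive route via Doob's inequality (sharp with norm $p'$) produces only the weaker factor $p'$. Squeezing this down to $(p')^{1/p}$ requires the sharp reverse H\"older inequality for $\sigma\in A_\infty$, with exponent $r_\sigma=1+(c_n[\sigma]_{A_\infty})^{-1}$, to absorb one power of $p'$ directly into the estimation of the $\sigma$-Carleson sum, as in the original argument of Hyt\"onen--P\'erez.
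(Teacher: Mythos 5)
Your proposal is correct, but it follows a genuinely different route from the one this paper takes to establish the statement. You go through the sparse domination of $\M$ obtained from the Calder\'on--Zygmund level-set decomposition, dualize by substituting $f=g\sigma$, extract the $A_p$ constant pointwise from each sparse summand, and close the estimate with the Carleson embedding theorem, where the Carleson constant of the sparse family is bounded via the Fujii--Wilson form of $[\sigma]_{A_\infty}$. This is essentially the strategy of the original reference \cite{HP}. The present paper instead recovers the statement as the special case $v=w$, $q=p$ of Corollary~\ref{thm:maintwoweight}, whose proof combines three ingredients: the characterization $\|\M\|_{L^{p/r}(w)\to L^{p/r,\infty}(w)}\approx_n[w]_{A_{p/r}}^{r/p}$; the openness $[w]_{A_{p/r}}\le 2^{p-1}[w]_{A_p}$ for $r'=c_np'[\sigma]_{A_\infty}$, which comes from the sharp reverse H\"older inequality (Theorem~\ref{thm:SharpRHI} via Corollary~\ref{Apselfimprovetwoweight}); and the distribution-function extrapolation of Theorem~\ref{thm:main}, which upgrades the weak $L^{p/r}$ bound to a strong $L^{p,q}$ bound and records precisely the factor $(r')^{1/p}$. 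Your route decouples the two weight constants cleanly inside a single Carleson sum and is self-contained at the sparse level; the paper's route is longer to set up but applies uniformly across the Lorentz scale in $q$, the two-weight setting, and (with suitable modifications) the strong maximal function.

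One substantive correction: the concern you raise in your final paragraph is not actually an obstacle. Your argument yields $\|\M\|_{L^p(w)}\lesssim_n p'\big([w]_{A_p}[\sigma]_{A_\infty}\big)^{1/p}$, since Carleson embedding plus Doob's maximal inequality on $L^p(\sigma)$ contribute the factor $(p')^p$ at the level of $\|\M f\|_{L^p(w)}^p$. You then suggest that a further application of the reverse H\"older inequality is needed to improve $p'$ to $(p')^{1/p}$. But $p'$ and $(p')^{1/p}$ are comparable with an absolute constant: writing
\[
p'=(p')^{1/p}\,(p')^{1/p'},\qquad 1\le (p')^{1/p'}\le e^{1/e},
\]
since $t\mapsto t^{1/t}$ is bounded on $[1,\infty)$, we get $p'\le e^{1/e}\,(p')^{1/p}$. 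Thus the constant you already obtained is, up to a universal factor absorbed into $c_n$, exactly the one asserted in the theorem, and no reverse H\"older step is needed inside your Carleson argument. Your proof is complete as written.
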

Indeed, using the duality relationship \,$[\sigma]_{ A_{p'}}^{1/p'}=[w]_{ A_p}^{1/p}$ and $[\sigma]_{ A_\infty}\le C_n [\sigma]_{ A_{p'}}$,
we see immediately that 
\begin{equation*}
\big([w]_{ A_p} [\sigma]_{ A_\infty}\big)^{1/p} \leq C_n [w]_{A_p}^{\frac{1}{p-1}},
\end{equation*}
yielding \eqref{buckley0}.  

One really interesting consequence of this result is that it disproves the ``reverse'' $A_2$ conjecture for $\M$, namely that there is a dimensional constant $c_n$ such that for any $w\in A_2$, $ \|\M\|_{L^2(w)} \geq c_n [w]_{ A_2}$. By \eqref{HLLpAinfty}, to disprove the conjecture it is enough to find a family of weights in $A_2$ for which the constant $[\sigma]_{A_{\infty}}$ grows slower than $[w]_{A_2}$. We refer to \cite{HP} for more details.

It is somehow natural that the $A_{\infty}$ constant would appear in estimate \eqref{HLLpAinfty} because  
$w\in A_p$ if and only if both $w$ and $\sigma$ belong in $A_{\infty}$ (see \cite[Chapter IV, Theorem 2.17] {GCRdF}). However we cannot improve \eqref{HLLpAinfty} by replacing $[w]_{ A_p}$ by  $[w]_{ A_\infty}$,  namely, the estimate 
\begin{equation}\label{DoubleAinfty}
\|\M\|_{\strt{2ex}L^p(w)}\le C ([w]_{\strt{2ex}A_\infty}[\sigma]_{\strt{2ex}A_\infty})^{1/p}
\end{equation}
is false (see Section \ref{optimality}). 

A key estimate to derive \eqref{HLLpAinfty} was the following result, also from \cite{HP}. A simplified proof can be found in \cite{HPR}.

\begin{theorem}\label{thm:SharpRHI}
There is a dimensional constant $c_n\geq 1$ such that if $w\in A_{\infty}$ and if $1<r_w<\infty$ is given by 
$$r_w'=c_n[w]_{\strt{2ex}A_\infty},$$ 
then for all cubes $Q$, 
$$
\displaystyle\left( \avgint_Q w^{r_w}\right)^{1/r_w}\leq 2\avgint_Q w.
$$
\end{theorem}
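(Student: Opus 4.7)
Plan: The strategy is to combine the pointwise bound $w \le M(w\chi_{Q_0})$ (Lebesgue differentiation) with the Fujii--Wilson $A_\infty$ estimate applied not only to $Q_0$ but to every Calder\'on--Zygmund stopping cube inside it. This yields a Gr\"onwall-type differential inequality for the distribution function of $M(w\chi_{Q_0})$, whose decay rate is the exact match for the exponent $r_w$ in the statement.

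Setup: Fix $Q_0$ and set $a = \avgint_{Q_0} w$, $w_0 = w\chi_{Q_0}$. Write $r = 1+\delta$ with $\delta > 0$ to be chosen and introduce
\[
\mu(t) = |\{M w_0 > t\}\cap Q_0|,\quad \nu(t) = w(\{M w_0 > t\}\cap Q_0),\quad \psi(t) = \int_t^\infty \mu(s)\,ds.
\]
The pointwise bound and the layer-cake formula give
\[
\int_{Q_0} w^{1+\delta}\,dx \le \int_{Q_0} w (Mw_0)^\delta\,dx = \delta \int_0^\infty t^{\delta - 1}\nu(t)\,dt.
\]
For $t > a$, decompose $\{Mw_0 > t\}$ into the maximal dyadic subcubes $\{Q_j^t\}$ of $Q_0$ with $\avgint_{Q_j^t} w \in (t, 2^n t]$. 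Maximality forces every dyadic ancestor of $Q_j^t$ to have average $\le t$, so $Mw_0 = M(w\chi_{Q_j^t})$ on $Q_j^t$, and summing the Fujii--Wilson bound over $j$ yields
\[
t\mu(t) + \psi(t) = \int_{\{Mw_0>t\}} Mw_0\,dx \le [w]_{A_\infty}\,\nu(t) \le 2^n [w]_{A_\infty}\,t\mu(t),
\]
where the last step uses the Calder\'on--Zygmund bound $\nu(t) \le 2^n t \mu(t)$. Rearranging gives $\psi(t) \le -2^n [w]_{A_\infty}\,t\,\psi'(t)$; integrating this ODE produces
\[
\psi(t) \le \psi(a)\,(a/t)^\beta \quad (t \ge a), \qquad \beta := \frac{1}{2^n [w]_{A_\infty}},
\]
with $\psi(a) \le \int_{Q_0} Mw_0\,dx \le [w]_{A_\infty}\,a\,|Q_0|$ by Fujii--Wilson applied to $Q_0$ itself.

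Assembly and choice of exponent: The contribution of $[0,a]$ is estimated by $\nu(t) \le w(Q_0) = a|Q_0|$ and equals $a^{1+\delta}|Q_0|$. For the tail, combine $\nu \le 2^n t \mu$ with integration by parts using $\mu = -\psi'$:
\[
\int_a^\infty t^\delta \mu(t)\,dt = a^\delta \psi(a) + \delta \int_a^\infty t^{\delta-1}\psi(t)\,dt \le \frac{\beta\,\psi(a)\,a^\delta}{\beta - \delta},
\]
provided $\delta < \beta$. Using $\beta [w]_{A_\infty} = 2^{-n}$, the tail term $2^n\delta \int_a^\infty t^\delta \mu\,dt$ reduces to $\frac{\delta}{\beta-\delta}\,a^{1+\delta}|Q_0|$, and hence
\[
\int_{Q_0} w^{1+\delta}\,dx \le \frac{\beta}{\beta-\delta}\,a^{1+\delta}|Q_0|.
\]
Taking $\delta = \beta/2 = 1/(2^{n+1}[w]_{A_\infty})$ gives $\beta/(\beta-\delta) = 2$, so
\[
\Bigl(\avgint_{Q_0} w^{1+\delta}\Bigr)^{1/(1+\delta)} \le 2^{1/(1+\delta)}\,a \le 2a.
\]
With $r_w = 1+\delta$ this gives $r_w' = 1 + 1/\delta = 1 + 2^{n+1}[w]_{A_\infty}$, absorbed into $c_n = 2^{n+1}+1$.

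The main technical point is calibrating the exponent so that the constant on the right is \emph{exactly} $2$: the Gr\"onwall decay rate $\beta$ must equal twice the target $\delta$, which forces $r_w'$ to scale linearly in $[w]_{A_\infty}$. Passing from the non-dyadic $M$ of the statement to a dyadic variant only changes dimensional factors, absorbed into $c_n$.
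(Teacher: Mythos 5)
The paper does not prove this theorem: it records it as a known result, attributing it to \cite{HP} with a simplified proof in \cite{HPR}. Your argument is essentially the \cite{HPR} argument (stopping-time decomposition at each level $t$, the Fujii--Wilson $A_\infty$ bound applied cube-by-cube, and a Gr\"onwall inequality for the distribution function), so in spirit it matches the cited reference, and the arithmetic in the assembly step (the integration by parts, the choice $\delta=\beta/2$, and the resulting $r_w'\le(2^{n+1}+1)[w]_{A_\infty}$) is correct.

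There is, however, a precision issue you should fix. As written you use the full (non-dyadic) Hardy--Littlewood maximal operator $\M$ in $\mu$, $\nu$, $\psi$, and then decompose $\{\M w_0>t\}$ into maximal dyadic subcubes of $Q_0$ and assert $\M w_0=\M(w\chi_{Q_j^t})$ on $Q_j^t$. Neither step is correct for the non-dyadic $\M$: the super-level set $\{\M w_0>t\}$ is not a disjoint union of dyadic Calder\'on--Zygmund cubes, and a cube containing $x\in Q_j^t$ can stick slightly outside $Q_j^t$ and pick up mass of $w_0$ from a neighbouring stopping cube, making $\M w_0(x)$ strictly larger than $\M(w\chi_{Q_j^t})(x)$. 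Both assertions \emph{are} correct for the dyadic maximal operator $\M^d$ built from the dyadic children of $Q_0$, and the entire argument should be carried out with $\M^d$ in place of $\M$. Contrary to what your closing sentence suggests, this replacement costs nothing: since $\M^d(w\chi_Q)\le \M(w\chi_Q)$ pointwise, the dyadic Fujii--Wilson constant satisfies $[w]_{A_\infty^d}\le[w]_{A_\infty}$, and $w\le \M^d(w\chi_{Q_0})$ a.e.\ on $Q_0$ still holds by Lebesgue differentiation along dyadic cubes. With this change, the rest of the proof is correct exactly as you wrote it, now with $[w]_{A_\infty^d}$ in place of $[w]_{A_\infty}$ in the differential inequality, and the final bound $r_w'=1+2^{n+1}[w]_{A_\infty^d}\le(2^{n+1}+1)[w]_{A_\infty}$ follows.
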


We mention that a similar one-dimensional result was also obtained in \cite{BR} by completely different means. 

The following corollary, which is usually called the {\it openness} property of the $A_{p}$ condition, is an important consequence of the reverse H\"older property. For the purpose of our paper we will state it in its two-weight version.

\begin{corollary}\label{Apselfimprovetwoweight} 
Let $1<p<\infty$ and let $(v,w)\in
A_p$, and $\sigma:=w^{1-p'}\in A_\infty$. Then for $1<r<\infty$ satisfying 
$$r'=c_np'[\sigma]_{\strt{2ex}A_\infty},$$ we have $(v,w)\in A_{\frac{p}{r}},$ with
$$ [v,w]_{\strt{2ex}A_{\frac{p}{r}}} \leq 2^{p-1}\,[v,w]_{\strt{2ex}A_{p}}.$$
\end{corollary}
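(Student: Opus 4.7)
The plan is to reduce the two-weight $A_{p/r}$ constant to the two-weight $A_p$ constant by rewriting the only piece of the $A_{p/r}$ condition that depends on $w$ (namely $w^{1-(p/r)'}$) as a power of $\sigma$ and then applying the sharp reverse Hölder inequality from Theorem \ref{thm:SharpRHI} to $\sigma$. First, since $(p/r)'=p/(p-r)$, we have $1-(p/r)'=-r/(p-r)$, and using $w=\sigma^{1-p}$ one obtains $w^{1-(p/r)'}=\sigma^{\alpha}$ with $\alpha:=(p-1)r/(p-r)$. A direct check gives the convenient identity $\alpha\cdot(p-r)/r=p-1$, which lets one rewrite
$$\left(\avgint_Q w^{1-(p/r)'}\right)^{p/r-1}=\left[\left(\avgint_Q \sigma^{\alpha}\right)^{1/\alpha}\right]^{p-1}.$$

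The second step is to recognize that the choice $r'=c_np'[\sigma]_{A_{\infty}}$ is precisely the one that makes $\alpha$ equal to the sharp reverse Hölder exponent $r_\sigma$ of $\sigma$ from Theorem \ref{thm:SharpRHI}, which satisfies $r_\sigma'=c_n[\sigma]_{A_{\infty}}$. Indeed, solving $\alpha=r_\sigma$ for $r$ yields $r=pr_\sigma/(p-1+r_\sigma)$, and a short calculation then shows $r'=p'r_\sigma'=c_np'[\sigma]_{A_{\infty}}$, matching the hypothesis. (Along the way one verifies $1<r<p$ so that $A_{p/r}$ makes sense: $r'\ge p'$ gives $r\le p$, and $r'>1$ gives $r>1$; any marginal case can be absorbed into $c_n$.)

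With $\alpha=r_\sigma$, Theorem \ref{thm:SharpRHI} applied to $\sigma\in A_\infty$ gives $\left(\avgint_Q\sigma^{r_\sigma}\right)^{1/r_\sigma}\le 2\avgint_Q\sigma$ for every cube $Q$. Substituting this into the identity from the first step and multiplying by $\avgint_Q v$ yields
$$\left(\avgint_Q v\right)\left(\avgint_Q w^{1-(p/r)'}\right)^{p/r-1}\le 2^{p-1}\left(\avgint_Q v\right)\left(\avgint_Q\sigma\right)^{p-1},$$
and taking the supremum over $Q$ gives the claimed bound $[v,w]_{A_{p/r}}\le 2^{p-1}[v,w]_{A_p}$. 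I do not expect a real obstacle here: the proof is essentially an algebraic identification of exponents combined with a single invocation of Theorem \ref{thm:SharpRHI}, and the only point requiring care is confirming that the particular $r$ prescribed by the statement is exactly the one that aligns $\alpha$ with $r_\sigma$.
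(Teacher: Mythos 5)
Your proof is correct and follows essentially the same route as the paper's: both identify the exponent $\alpha=(p-1)r/(p-r)$ appearing in the $A_{p/r}$ condition with the sharp reverse H\"older exponent $r_\sigma$ of $\sigma$ (equivalently, observe $r_\sigma'=r'/p'$), then apply Theorem~\ref{thm:SharpRHI} and raise to the power $p-1$. You simply spell out the algebraic verifications that the paper leaves implicit.
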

\begin{proof}
We have $r_\sigma'={r'}/{p'}$ and hence, by Theorem~\ref{thm:SharpRHI}, we have
\[
\left(\avgint_Q w(x)^{1-(\frac{p}{r})'}\,\d x\right)^{\frac{p}{r}-1}=\left(\avgint_Q \sigma(x)^{r_\sigma}\,\d x\right)^{\frac{p-1}{r_\sigma}}\leq 2^{p-1}\left(\avgint_Q \sigma(x)\,\d x\right)^{p-1}.
\]
The result now follows from the definition of the weight constants.
\end{proof}

The proof of \eqref{HLLpAinfty} was simplified in \cite{HPR} using this result
(which can also be found in the Paseky lecture notes \cite{CP7-Paseky}). 
Finally,  it should be mentioned that the reverse H\"older property or the openness property was completely avoided  in \cite{PR} where a more general theorem was proved containing the mixed $A_{p}-A_{\infty}$ result \eqref{HLLpAinfty}, this time using the two weight testing characterization of Sawyer for $\M$ \cite{S} (see also \cite{GCRdF} and \cite{J}).

The main purpose of this paper is to understand most of the results mentioned above but in the context of Lorentz spaces.
 Indeed, motivated by Muckenhoupt's paper \cite{M} 
H. Chung, R. Hunt, and D. Kurtz  improved Muckenhoupt's theorem in \cite{CHK} within the context of Lorentz spaces. 

\begin{theorem} \label{CHK} \
Let $p\in(1,\infty)$ and let $q\in [1,\infty]$, then  
$$
\|\M\|_{\Lpq}<\infty   \qquad \text{if and only if} \qquad  w \in A_p.
$$
\end{theorem}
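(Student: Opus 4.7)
My plan is to prove the two implications separately: the necessity by testing $\M$ on characteristic functions of subsets of a cube, and the sufficiency by combining the openness of the $A_p$ class, Buckley's sharp bound \eqref{buckley0}, and Marcinkiewicz interpolation in the weighted Lorentz scale.

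For the necessity, fix a cube $Q$ and a measurable subset $E \subset Q$. Since the decreasing rearrangement of $\chi_E$ with respect to the measure $w\,\d x$ is $\chi_{[0,w(E))}$, a direct computation from the layer-cake formula gives $\|\chi_E\|_{\Lpq} = c_{p,q}\,w(E)^{1/p}$ for an explicit constant $c_{p,q}$ depending only on $p$ and $q$. Combined with the pointwise lower bound $\M \chi_E \geq (|E|/|Q|)\chi_Q$ and the assumed boundedness of $\M$ on $\Lpq$, this yields
\[
\frac{|E|}{|Q|}\, w(Q)^{1/p} \leq C\, w(E)^{1/p}
\]
for every cube $Q$ and every $E \subset Q$. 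Raising to the $p$-th power produces the classical ``subset'' characterization of Muckenhoupt's $A_p$ class, which is well known to be equivalent to $w \in A_p$ (indeed, this is essentially the necessity step in Muckenhoupt's original argument).

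For the sufficiency, assume $w \in A_p$. Corollary~\ref{Apselfimprovetwoweight} (applied with $v=w$) produces some $r > 1$ such that $w \in A_{p/r}$, and Buckley's theorem \eqref{buckley0} at exponent $p/r$ then shows that $\M$ is of strong---hence weak---type $(p/r,p/r)$ with respect to the measure $w\,\d x$. Together with the trivial weak-type $(\infty,\infty)$ bound coming from $\|\M f\|_\infty \leq \|f\|_\infty$, this places $\M$ within the scope of Marcinkiewicz interpolation relative to $w\,\d x$. Since $p/r < p < \infty$, the theorem delivers boundedness of $\M$ on $\Lpq$ for every $q \in [1,\infty]$. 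The only point demanding a brief check is that Marcinkiewicz interpolation transfers verbatim to the weighted Lorentz setting, which is routine because $\Lpq$ is defined purely through the distribution function of $|f|$ relative to $w\,\d x$ and the classical proof only invokes Fubini and the layer-cake identity; the main conceptual content in the sufficiency is thus the openness of $A_p$, exactly as in the unweighted Lorentz theory.
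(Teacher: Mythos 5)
Your sufficiency argument is sound and, in substance, is the same as the paper's: Theorem~\ref{thm:main} is a hand-crafted Marcinkiewicz-type interpolation between the $L^\infty$ endpoint (property~\ref{enum:op1}) and a weak-type $(p/r,p/r)$ endpoint (property~\ref{enum:op2}), fed by the openness of $A_p$ (Corollary~\ref{Apselfimprovetwoweight}), exactly as you propose. Citing Marcinkiewicz interpolation for Lorentz spaces as a black box does deliver the qualitative statement for $q\in[1,\infty]$; the reason the paper re-derives the interpolation step from scratch is that the sharp mixed $A_p$--$A_\infty$ exponent in Corollary~\ref{MixedApAinftyLpq} requires tracking the precise dependence of the implied constant on $r$, hence on $[\sigma]_{A_\infty}$, which an off-the-shelf statement of Marcinkiewicz would obscure.

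The necessity, however, has a genuine gap. Testing $\M$ on characteristic functions only yields the ``subset condition''
\[
\left(\frac{|E|}{|Q|}\right)^p\leq C\,\frac{w(E)}{w(Q)},\qquad E\subset Q\ \text{measurable},\ Q\ \text{a cube},
\]
and, contrary to what you assert, this is strictly weaker than $w\in A_p$: it does not force $\sigma=w^{1-p'}$ to be locally integrable. Concretely, $w(x)=|x|^{n(p-1)}$ satisfies the subset condition with a uniform constant (for a cube $Q$ near the origin of side $\ell$, among $E\subset Q$ of fixed Lebesgue measure the minimum of $w(E)$ is attained by a ball centred at the origin, giving $w(E)\gtrsim|E|^p\gtrsim|E|^p\,w(Q)/|Q|^p$; for $Q$ away from the origin $w$ is essentially constant on $Q$ and the inequality is trivial), yet $\sigma(x)=|x|^{-n}\notin L^1_{\mathrm{loc}}$, so $w\notin A_p$, and one checks directly that $\M\chi_{B(0,1)}\notin L^{p,q}(w)$. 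Thus the subset condition by itself cannot be the conclusion of the necessity argument. The repair is exactly Muckenhoupt's: test on $f_\varepsilon:=(w+\varepsilon)^{1-p'}\chi_Q$, which is bounded with compact support and hence belongs to $L^{p,q}(w)$, and use $\M f_\varepsilon\geq\bigl(\avgint_Q f_\varepsilon\bigr)\chi_Q$; after dividing and letting $\varepsilon\to 0$ by monotone convergence one obtains the full $A_p$ condition, including the finiteness of $\avgint_Q\sigma$. Equivalently, the boundedness on $L^{p,q}(w)$ gives $\M:L^{p,q}(w)\to L^{p,\infty}(w)$, and applying this to the same test functions recovers the left inequality in \eqref{CharactApWeakp,p}; but that inequality is not a consequence of the subset condition alone, which is the step that is missing from your write-up.
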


Our aim with this paper is to extend this theorem in several ways.  
First we will present a general result, Theorem \ref{thm:main}, from which we will be able to deduce, among other things, an improvement of Theorem \ref{CHK} by introducing the $A_p-A_{\infty}$ constants in the spirit of \eqref{HLLpAinfty}. It's worth noting that we also obtain sharpness in the exponents of the constants of the weight. This result is the content of Corollary \ref{MixedApAinftyLpq}.

From Theorem \ref{thm:main} we will also be able to deduce an extension of Theorem \ref{CHK} for the strong maximal operator.
This operator, denoted by $\M_{\cR}$, is defined as
\begin{equation}\label{StrongMF}
\M_{\cR}(f)(x)=\sup_{R\ni x} \avgint_R |f(y)|\,\d y,\,  
\end{equation}
 where  the supremum is taken over all rectangles in $\RN$ with
sides parallel to the coordinate axes.

It is well known that the appropriate class of weights attached to this maximal function is the \emph{strong} class defined analogously to the standard $A_{p}$ class as follows: we will say that $w \in A_{p,\cR}$ if
\begin{equation*}\label{eq:Ap-strong}
[w]_{A_{p,\cR}}:=\sup_{R\in \cR} \left(\avgint_R w(x)\,\d x\right) \left(\avgint_R w(x)^{-\frac{1}{p-1}}\,\d x \right)^{p-1}< \infty.
\end{equation*}
This condition was introduced in \cite{BK} and the following result was essentially known for $p\in(1,\infty)$: 
$$\|\M_{\cR}\|_{L^p(w)}<\infty \quad \Longleftrightarrow \quad  \|\M_{\cR}\|_{L^p(w)\to L^{p,\infty}(w)} <\infty \quad \Longleftrightarrow \quad  w \in A_{p,\cR}.$$

More recently, these qualitative results have been refined, and the following quantitative versions are well known:  
\begin{equation}\label{Lp-pEstimate}
\|\M_{\cR}\|_{L^p(w)} \leq c_n\, (p')^n  [w]^{\frac{n}{p-1}}_{A_{p,\cR}},  \qquad 1<p<\infty,
\end{equation}
and the  corresponding  weak norm estimate, 
\begin{equation}\label{WeakLp-p-Estimate}
\|\M_{\cR}\|_{L^p(w)\to L^{p,\infty}(w)} \leq c_n(p')^{n-1}[w]_{_{A_{p,\cR}}}^{\frac1p+\frac{n-1}{p-1}},     \qquad 1<p<\infty.
\end{equation} 

These results were improved in \cite{LPR17} by  proving mixed $A_{p}-A_{\infty}$ estimates in the spirit of Theorem \ref{MixedApAinftyLp}. To do this, the $A_{\infty,\cR}$ constant was introduced in this work. This class of weights had been already introduced in several papers in the same way as in the cubic case, to wit 
\begin{equation*}
A_{\infty,\cR}=\bigcup_{p>1}A_{p,\cR},
\end{equation*}
and it enjoys similar geometric conditions. Again, there are several different characterizations of this class. To define the associated constant we choose to work with the Fujii-Wilson constant: 
\begin{equation}\label{eq:AinftycR}
   [w]_{A_{\infty,\cR}}:=\sup_{R\in \cR}\frac{1}{w(R)}\int_R \M_{\cR}(w\chi_R )\,\d x <\infty.
\end{equation}

As before a key estimate for the improvement was an appropriate reverse H\"older inequality in this context as shown in \cite{LPR17}.

\begin{theorem}\label{thm:Ainfty-RHI-cR}
Let $w\in A_{\infty,\cR}$ and let $1<\rho_w<\infty$ be given by $$\rho_w'=2^{n+1}[w]_{A_{\infty,\cR}}.$$ Then for all rectangles $R$,
\begin{equation*}
\left(\avgint_{R} w^{\rho_w}\,\d x\right)^{\frac{1}{\rho_w}} \le 2\avgint_{R} w \,\d x.
\end{equation*}
\end{theorem}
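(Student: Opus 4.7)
The proof parallels that of Theorem~\ref{thm:SharpRHI} in the cubic setting due to \cite{HP,HPR}, with the Hardy--Littlewood maximal operator $\M$ replaced by the strong maximal operator $\M_\cR$ and cubes replaced by rectangles in $\cR$. The dimensional factor $2^{n+1}$ in the definition of $\rho_w'$ reflects the $n$-parameter product structure of the strong basis. Fix a rectangle $R_0 \in \cR$ and set $\alpha := \avgint_{R_0} w$ and $K := [w]_{A_{\infty,\cR}}$.

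The first key ingredient is a local strong Calder\'on--Zygmund decomposition. After reducing to the dyadic strong maximal operator $\M_\cR^d$ attached to a fixed product dyadic grid in $\RN$ via a standard translation argument, one extracts for each $t > \alpha$ a family $\{R_j^t\}_{j} \subset \cR$ of essentially disjoint dyadic rectangles contained in $R_0$ with $t < \avgint_{R_j^t} w \le 2^n t$, whose union covers the level set $\Omega_t := \{x \in R_0 : \M_\cR^d(w\chi_{R_0})(x) > t\}$ up to a null set. The second key ingredient is a geometric decay derived from Fujii--Wilson: applying \eqref{eq:AinftycR} on each $R_j^t$ and combining with Chebyshev's inequality, one obtains for $s \ge 2^n t$ that
\[
|\Omega_s \cap R_j^t| \le \frac{1}{s}\int_{R_j^t}\M_\cR(w\chi_{R_j^t})\,\d x \le \frac{K\, w(R_j^t)}{s} \le \frac{K\, 2^n\, t}{s}|R_j^t|,
\]
and summing over $j$ yields the contraction $|\Omega_s| \le (K\,2^n\, t/s)|\Omega_t|$. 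The choice $s = \rho_w' t = 2^{n+1}K t$ gives the geometric decay $|\Omega_{\rho_w' t}| \le \tfrac12|\Omega_t|$, and iteration produces $|\Omega_{(\rho_w')^k \alpha}| \le 2^{-k}|R_0|$ for all $k \ge 0$.

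The final step combines the pointwise bound $w \le \M_\cR^d(w\chi_{R_0})$ (valid a.e.\ on $R_0$ by Lebesgue differentiation in the basis $\cR$) with a layer-cake expansion of $\int_{R_0} w^{\rho_w}\,\d x$ and the geometric-decay estimate above, calibrated to the exponent $\rho_w$ dual to $\rho_w' = 2^{n+1}K$, to conclude that $\avgint_{R_0} w^{\rho_w}\,\d x \le 2^{\rho_w}\alpha^{\rho_w}$, which is the stated reverse H\"older inequality.

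The principal technical obstacle is the strong Calder\'on--Zygmund decomposition: unlike dyadic cubes, dyadic rectangles in a product grid do not admit a single linear stopping hierarchy, so the natural ``maximal dyadic rectangles with large average'' family need not be pairwise disjoint. The remedy is a coordinate-by-coordinate stopping procedure iterated over the $n$ factors of the product dyadic grid, which yields a disjoint family with the two-sided average bound $t < \avgint_{R_j^t} w \le 2^n t$ at the price of the dimensional factor $2^n$ that propagates into the stated constant $\rho_w' = 2^{n+1}[w]_{A_{\infty,\cR}}$.
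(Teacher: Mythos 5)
The paper does not prove Theorem~\ref{thm:Ainfty-RHI-cR}; it is quoted as a known result from \cite{LPR17}, so there is no in-text argument to compare against. Judging your proposal on its own merits, it has a genuine gap precisely at the point you yourself flag as the ``principal technical obstacle.'' A local Calder\'on--Zygmund decomposition for dyadic rectangles inside $R_0$ does not exist, and the ``coordinate-by-coordinate stopping procedure'' you invoke does not produce what you need. There are two independent failures. First, maximal dyadic rectangles in a product grid with $\avgint_R w>t$ are not pairwise disjoint (two intersecting dyadic rectangles need not be nested), so you cannot simply take them all. Second, if you instead stop first in $x_1$ (selecting maximal $I\times J_0$ with $\avgint_{I\times J_0}w>t$), then in $x_2$ within each selected slab, and so on, the resulting family is disjoint but fails to cover $\Omega_t$: a point $(x,y)$ can lie in $\Omega_t$ because of a thin rectangle $I\times J$ even though every full slab $I\times J_0$ containing it has average at most $t$, so the first-coordinate stopping never triggers. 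With either failure the two-sided bound $t < \avgint_{R^t_j}w\le 2^n t$ together with essential disjointness is not available, and without it the entire chain $|\Omega_s|\le (2^nKt/s)|\Omega_t|$, the geometric decay, and the layer-cake computation collapse, since there is no valid family on which to localize the Fujii--Wilson estimate.

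This is exactly the structural difficulty that makes the strong-maximal setting hard, and it is why \cite{LPR17} does not argue through a rectangular Calder\'on--Zygmund decomposition. Their mechanism is a multidimensional version of Riesz's rising sun lemma, which decomposes a super-level set into pairwise disjoint measurable pieces (not rectangles) on which the average of $w$ is controlled exactly, thereby sidestepping the overlap problem. As written, your key decomposition is asserted rather than constructed, and is in fact false in the form stated; replacing it with the rising-sun machinery (or an equivalent substitute) is not a cosmetic fix but the substance of the theorem.
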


This result is interesting for many other reasons, see for instance \cite{CMPR} for the context of Poincaré-Sobolev inequalities in product spaces.

From this result one derives the openness property of these classes of weights in the same way as before.

\begin{corollary} \label{ApselfimproveStrong} 

Let $1<p<\infty$ and let $w\in
{A_{p,\cR}}$. Set $\sigma:=w^{1-p'}$. Then for $1<r<\infty$ satisfying 
$$r'=2^{n+1}p'[\sigma]_{A_{\infty,\cR}}$$
we have $w\in A_{\frac{p}{r},\cR},$ with
$$ [w]_{A_{\frac{p}{r},\cR}} \leq 2^{p-1}\,[w]_{A_{p,\cR}}.$$
\end{corollary}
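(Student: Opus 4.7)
The plan is to run the same argument as in the proof of Corollary \ref{Apselfimprovetwoweight}, with the rectangle reverse H\"older inequality of Theorem \ref{thm:Ainfty-RHI-cR} in place of the cubic one of Theorem \ref{thm:SharpRHI}. The proof is essentially algebraic once one identifies the correct exponent at which to apply the reverse H\"older inequality to $\sigma$: we want to bound $\avgint_R w^{1-(p/r)'}$ by a power of $\avgint_R \sigma$, and since $w^{1-(p/r)'}=\sigma^{\rho_\sigma}$ for an appropriate $\rho_\sigma>1$, the question reduces to raising $\sigma$ to a controllable power and invoking a reverse H\"older inequality for rectangles.

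First I would verify the exponent identity. A short computation starting from $(p/r)'=p/(p-r)$ shows that $w^{1-(p/r)'}=\sigma^{\rho_\sigma}$ with $\rho_\sigma=r(p-1)/(p-r)$, and that $\rho_\sigma'=r'/p'$. Combined with the hypothesis $r'=2^{n+1}p'[\sigma]_{A_{\infty,\cR}}$, this gives $\rho_\sigma'=2^{n+1}[\sigma]_{A_{\infty,\cR}}$, which is exactly the threshold allowed by Theorem \ref{thm:Ainfty-RHI-cR} applied to $\sigma$. For this to make sense one needs $\sigma\in A_{\infty,\cR}$, which follows immediately from $w\in A_{p,\cR}$ since $\sigma\in A_{p',\cR}\subset A_{\infty,\cR}$ by the symmetry of the $A_{p,\cR}$ definition in $w$ and $\sigma$.

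Next I would apply Theorem \ref{thm:Ainfty-RHI-cR} to $\sigma$ at the exponent $\rho_\sigma$: for every rectangle $R\in\cR$,
\[
\left(\avgint_R \sigma^{\rho_\sigma}\,\d x\right)^{1/\rho_\sigma}\le 2\avgint_R \sigma\,\d x.
\]
Raising both sides to the power $p-1$ and using $(p-1)/\rho_\sigma=p/r-1$ together with $\sigma^{\rho_\sigma}=w^{1-(p/r)'}$ yields
\[
\left(\avgint_R w^{1-(p/r)'}\,\d x\right)^{p/r-1}\le 2^{p-1}\left(\avgint_R \sigma\,\d x\right)^{p-1}.
\]
Multiplying by $\avgint_R w\,\d x$ and taking the supremum over all rectangles $R\in\cR$ converts this into the advertised bound $[w]_{A_{p/r,\cR}}\le 2^{p-1}[w]_{A_{p,\cR}}$.

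I do not expect any genuine obstacle here; the argument is a direct transcription of the proof already presented for Corollary \ref{Apselfimprovetwoweight}, with the rectangle-based reverse H\"older inequality doing all the analytic work. The one point that requires a bit of bookkeeping is checking that the power $\rho_\sigma$ of $\sigma$ needed for the identity $\sigma^{\rho_\sigma}=w^{1-(p/r)'}$ lands precisely at the threshold $\rho_\sigma'=2^{n+1}[\sigma]_{A_{\infty,\cR}}$ permitted by Theorem \ref{thm:Ainfty-RHI-cR}, which is why the hypothesis on $r'$ is stated in the specific form $r'=2^{n+1}p'[\sigma]_{A_{\infty,\cR}}$.
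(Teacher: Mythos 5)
Your proof is correct and follows exactly the route the paper intends: the paper itself provides no written proof for this corollary, stating only that it is derived "in the same way as before," i.e., by transcribing the argument for Corollary~\ref{Apselfimprovetwoweight} with Theorem~\ref{thm:Ainfty-RHI-cR} in place of Theorem~\ref{thm:SharpRHI}. Your exponent bookkeeping ($\rho_\sigma=r(p-1)/(p-r)$, $\rho_\sigma'=r'/p'$, $(p-1)/\rho_\sigma=p/r-1$) and the observation that $\sigma\in A_{p',\cR}\subset A_{\infty,\cR}$ are precisely the points that need checking, and they are handled correctly.
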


The following $A_{p}-A_{\infty}$ remark was made in \cite{LPR17}:
\begin{equation}\label{eq:StrongHLLpAinfty} 
  \|\M_{\cR}\|_{L^p(w)} \leq c\, (p')^n  [w]^{\frac{1}{p}+ 2\frac{n-1}{p-1}}_{A_{p,\cR}} [\sigma]_{A_{\infty,\cR}}^{\frac{1}{p}},   \qquad 1<p<\infty.
\end{equation}
as it turns out, the result we obtain improves this.

Finally, we wish to use Theorem~\ref{thm:main} to consider a dual estimate for $M$ on weighted Lorentz spaces. Unlike what happens for Lebesgue spaces, these spaces generally do not dualize into other weighted Lorentz space. Indeed, when viewing the space $X:=L^{p,q}(w)$ for $p,q\in(1,\infty)$ as a Banach function space over the associated unweighted measure space, then its associate space $X'$ is given through
\[
\|f\|_{X'}=\Big\|\frac{f}{w}\Big\|_{L^{p',q'}(w)}.
\]
While in the case $p=q$ we have $X'=L^{p'}(w^{1-p'})$, the situation is more complicated in the general case $p\neq q$. Setting $Y=L^{p,q}(v)$, the dual boundedness $\M:Y'\to X'$ is given by
\begin{align*}
\|\M\|_{Y'\to X'}&=\sup_{\|f\|_{L^{p',q'}(v)}=1}\|\M(fv)\|_{X'}
=\sup_{\|f\|_{L^{p',q'}(v)}=1}\Big\|\frac{\M(fv)}{w}\Big\|_{L^{p',q'}(w)}\\
&=\|T\|_{L^{p',q'}(v)\to L^{p',q'}(w)},
\end{align*}
where $Tf:=\frac{\M(fv)}{w}$. Hence, studying our dual problem is synonymous with studying the boundedness of $T$ on weighted Lorentz spaces. This operator also appears in the so called Sawyer's problem (see \cite{LOP} and the references therein).

\section{Statement of the main results}

We turn our attention to Lorentz spaces. Traditionally, this family of function spaces have been defined in terms of decreasing rearrangements of functions, this is the case in the aforementioned work \cite{CHK}. Here we will be using a definition that makes use of the distribution of a function instead, a definition we find more approachable. For $p,q>0$ and a weight $w$ we say that a function $f$ belongs to the weighted Lorentz space $L^{p,q}(w)$ if 
\[\|f\|_{\Lpq}:= \Bigg(  p\,\int_{0}^{\infty}s^q w\Big(\{x\in \R^n: |f(x)|>s\}\Big)^{q/p} \,\ds \Bigg)^{1/q}<\infty.\]	
Observe that when $q=p$ the Lorentz space $\Lpq$ coincides with the usual $L^p(w)$ space. It's a well known property that the Lorentz spaces are nested, that is $\Lpq \subset L^{p,r}(w)$ if $0<q<r\le\infty$. Moreover, we have the norm inequality $\|f\|_{L^{p,r}(w)}\lesssim_{p,q,r}\|f\|_{\Lpq}$. It is not surprising then that both the case $q=1$ and $q=\infty$, often referred as the weak $L^p(w)$ space, are of special interest in the theory. The space $L^{p,\infty}(w)$ which is not included directly in the definition and is given by
\begin{equation*}
\Vert f \Vert_{L^{p,\infty}(w)}:= \sup_{s>0} s (w\{x\in \R^n: |f(x)|>s\})^{1/p}<\infty. 
\end{equation*}

For a given operator $T$ acting on $\Lpq$, whether it is linear or not, we use the following notation 
\begin{equation*}
   \Norm{T}{L^{p,q}(w)}:=\sup_{f\neq 0}\frac{\Norm{Tf}{L^{p,q}(w)}}{\Norm{f}{L^{p,q}(w)}}.
\end{equation*}

Our main general result is the following.

\begin{theorem}\label{thm:main}
Let $(\Omega,\mu)$ be a $\sigma$-finite measure space. Let $p\in(1,\infty)$, $q\in(0,\infty]$, $r\in(1,p)$, $A>0$,  and let $v,w$ be weights in $\Omega$. Suppose $T$ is a positive sublinear operator in $L^0(\Omega)$ satisfying the following properties:
\begin{enumerate}[(i)]
\item\label{enum:op1} $T$ is defined on the constant function $1$ with $|T1|\leq A1$;
\item\label{enum:op2} $T$ is bounded from $L^{p/r}(w)$ to $L^{p/r,\infty}(v)$.
\end{enumerate}

Then
\begin{align*}
\|T\|_{L^{p,q}(w)\to L^{p,q}(v)}
\lesssim\begin{cases}
(1+A)(r')^{\frac{1}{p}}\|T\|^{\frac{1}{r}}_{L^{p/r}(w)\to L^{p/r,\infty}(v)} & \text{if $p\leq q\leq\infty$;}\\
(1+A)\big(\frac{4r'}{q}\big)^{\frac{1}{q}}\|T\|^{\frac{1}{r}}_{L^{p/r}(w)\to L^{p/r,\infty}(v)} & \text{if $0<q\leq p$.}
\end{cases}
\end{align*}
\end{theorem}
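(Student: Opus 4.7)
The plan is to carry out a Marcinkiewicz-style real interpolation argument, exploiting hypothesis (ii) on the ``large'' part of $f$ and the endpoint information $|T1|\le A$ from (i) on the ``small'' part. Fix $s>0$, set $\eta:=\frac{1}{2(1+A)}$, and decompose
\[
f=f_s^1+f_s^2,\qquad f_s^1:=f\chi_{\{|f|\le\eta s\}},\quad f_s^2:=f\chi_{\{|f|>\eta s\}}.
\]
By positivity, sublinearity and (i), $|Tf_s^1|\le \eta s\,T1\le s/2$ pointwise, so that $\{|Tf|>s\}\subseteq\{|Tf_s^2|>s/2\}$. Writing $C_0:=\|T\|_{L^{p/r}(w)\to L^{p/r,\infty}(v)}$, hypothesis (ii) yields
\[
v\big(\{|Tf|>s\}\big)^{r/p}\le 2C_0\,s^{-1}\|f_s^2\|_{L^{p/r}(w)}.
\]

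Raising this to the $q/p$-power, multiplying by $ps^{q-1}$ and integrating in $s$ produces
\[
\|Tf\|_{L^{p,q}(v)}^q\le p(2C_0)^{q/r}\int_0^\infty s^{q-1-q/r}\|f_s^2\|_{L^{p/r}(w)}^{q/r}\,\d s.
\]
Setting $\lambda(t):=w(\{|f|>t\})$, the layer-cake identity gives
\[
\|f_s^2\|_{L^{p/r}(w)}^{p/r}=(\eta s)^{p/r}\lambda(\eta s)+\tfrac{p}{r}\int_{\eta s}^\infty t^{p/r-1}\lambda(t)\,\d t,
\]
so that after the substitution $u=\eta s$ (which extracts an overall factor $\eta^{-q/r'}=(2(1+A))^{q/r'}$) the remaining task is to bound the resulting integral in $u$ by a constant times $\|f\|_{L^{p,q}(w)}^q=p\int_0^\infty u^{q-1}\lambda(u)^{q/p}\,\d u$.

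This last step bifurcates according to whether $q/p\ge 1$ or $q/p\le 1$. When $p\le q<\infty$ one applies the elementary inequality $(a+b)^{q/p}\le 2^{q/p-1}(a^{q/p}+b^{q/p})$; the leading term is immediate by a change of variables, and the tail term is handled by the weighted Hardy inequality
\[
\int_0^\infty u^{\beta-1}\Bigg(\int_u^\infty h(t)\,\d t\Bigg)^{Q}\d u\le\Big(\tfrac{Q}{\beta}\Big)^{Q}\int_0^\infty h(t)^{Q}t^{Q+\beta-1}\,\d t,\qquad Q\ge 1,\ \beta>0,
\]
applied with $Q=q/p$, $\beta=q/r'$ and $h(t)=t^{p/r-1}\lambda(t)$; the constant $(Q/\beta)^Q=(r'/p)^{q/p}$ combined with the $\eta^{-q/r'}$ factor above delivers, after a $q$-th root, the announced $(1+A)(r')^{1/p}$. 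When $0<q\le p$ one instead uses $(a+b)^{q/p}\le a^{q/p}+b^{q/p}$ together with a dyadic decomposition of $[u,\infty)$ into the intervals $[2^ku,2^{k+1}u)$, on which $\lambda$ is controlled by its value at $2^ku$; the resulting geometric series sums to $(1-2^{-q/r'})^{-1}\lesssim r'/q$, which after a $q$-th root yields the $(4r'/q)^{1/q}$ factor. The endpoint $q=\infty$ is treated analogously with suprema replacing integrals and the monotonicity of $\lambda$ handling the layer-cake term directly.

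The main technical point is the careful bookkeeping of constants to recover the two sharp exponents. The dichotomy stems from the different behaviour of $(a+b)^{q/p}$ according to $q/p\gtrless 1$, and in both cases the denominator responsible for the $r'$ dependence is $q(1-1/r)=q/r'$: it appears in the Hardy constant when $q\ge p$ and in the geometric sum when $q\le p$. Before invoking (ii) one should also verify that $\|f_s^2\|_{L^{p/r}(w)}<\infty$ whenever $\|f\|_{L^{p,q}(w)}<\infty$; this follows from the layer-cake identity together with the nesting $L^{p,q}(w)\subseteq L^{p,\infty}(w)$, and it ensures that the application of the weak-type hypothesis is legitimate.
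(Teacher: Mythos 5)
Your proof is correct and follows essentially the same route as the paper's. You decompose $f$ at each level into a large part (where the weak $(p/r,p/r)$ hypothesis is invoked) and a small part (controlled via $T1\le A$), obtain a pointwise bound on the distribution function of $Tf$, and then estimate the resulting integral by splitting into the regimes $q\ge p$ and $q\le p$. This is exactly the structure of the paper's argument. In the regime $p\le q$ you invoke the weighted Hardy inequality for the tail operator, whereas the paper changes variables and applies Minkowski's integral inequality; these are equivalent up to bookkeeping (Hardy's inequality in that form is proved by precisely such a Minkowski swap), and both yield the factor $(r')^{1/p}$. In the regime $q\le p$ your direct dyadic decomposition of $[u,\infty)$ with $\ell^{q/p}\hookrightarrow\ell^1$ is exactly the content of the paper's Lemma~3.1 (which the paper isolates as a standalone lemma for clarity), and both produce the geometric series yielding $(r'/q)^{1/q}$.

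One small slip: after the display bounding $v(\{|Tf|>s\})^{r/p}$, you write ``raising this to the $q/p$-power,'' but to reach $v(\{|Tf|>s\})^{q/p}$ on the left you should raise to the $q/r$-power. The displayed inequality that follows, with its factor $(2C_0)^{q/r}$, is nevertheless correct, so this is only a misstatement of the exponent, not an error in the computation. The $q=\infty$ endpoint, which you address briefly, does indeed go through by replacing the integral with a supremum and using $\lambda(t)\le\|f\|_{L^{p,\infty}(w)}^p t^{-p}$ in the layer-cake identity, yielding the same $(r')^{1/p}$ dependence.
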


As a corollary we have the promised extension of Theorem \ref{CHK}.

\begin{corollary} \label{MixedApAinftyLpq} \

1) Let $p\in(1,\infty)$, $q\in(0,\infty]$, and $w\in A_p$ with $\sigma:=w^{1-p'}$. Then
$$
\|\M\|_{\Lpq}
\lesssim_{q,n} (p')^{\frac1{\min(p,q)}}\,[w]^{\frac1p}_{A_p}[\sigma]^{\frac1{\min(p,q)}}_{A_{\infty}}.
$$

2) Furthermore the result is optimal, namely if we replace any of the exponents by 
a smaller one, then the result is false.

\end{corollary}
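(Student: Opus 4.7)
The plan is to apply Theorem~\ref{thm:main} to $T=\M$ with $v=w$, after tuning the reverse-Hölder exponent $r$ so that $\M$ is controllable on $L^{p/r}(w)$ in terms of the $A_p$ constant of $w$.

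First I verify the hypotheses. Since $\M 1 = 1$, condition (i) of Theorem~\ref{thm:main} holds with $A = 1$. For condition (ii), I use the classical weak-type bound \eqref{CharactApWeakp,p} at exponent $p/r$, which requires $w\in A_{p/r}$. The openness property (Corollary~\ref{Apselfimprovetwoweight}) delivers precisely this, provided $r'\approx p'[\sigma]_{A_\infty}$. So I choose $r' = c_n p'[\sigma]_{A_\infty}$, which gives $w\in A_{p/r}$ with $[w]_{A_{p/r}} \leq 2^{p-1}[w]_{A_p}$.

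Consequently,
\[
\|\M\|_{L^{p/r}(w)\to L^{p/r,\infty}(w)} \lesssim_n [w]_{A_{p/r}}^{r/p} \lesssim_{p} [w]_{A_p}^{r/p},
\]
so that $\|\M\|^{1/r}_{L^{p/r}(w)\to L^{p/r,\infty}(w)} \lesssim [w]_{A_p}^{1/p}$. Substituting into Theorem~\ref{thm:main}, in the range $p\leq q\leq\infty$ I obtain
\[
\|\M\|_{\Lpq} \lesssim (r')^{1/p}[w]_{A_p}^{1/p} \lesssim (p'[\sigma]_{A_\infty})^{1/p}[w]_{A_p}^{1/p},
\]
while in the range $0<q\leq p$,
\[
\|\M\|_{\Lpq} \lesssim_{q} (r')^{1/q}[w]_{A_p}^{1/p} \lesssim_{q} (p'[\sigma]_{A_\infty})^{1/q}[w]_{A_p}^{1/p}.
\]
Both cases unify by writing the exponent of $p'[\sigma]_{A_\infty}$ as $1/\min(p,q)$, which is part (1).

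Part (2) — optimality — requires matching lower bounds and is the more delicate task. The exponent $1/p$ on $[w]_{A_p}$ cannot be reduced, as Buckley's example on the real line using power weights $|x|^{\alpha}$ with $\alpha\uparrow p-1$ already rules this out in the Lebesgue setting $q=p$. For the exponent $1/\min(p,q)$ on $[\sigma]_{A_\infty}$, my plan is to select a one-parameter family of power weights on $\R$ (or equivalently a duality pair $(w,\sigma)$) for which $[\sigma]_{A_\infty}$ grows strictly faster than $[w]_{A_p}$, then compute $\|\M f\|_{\Lpq}/\|f\|_{\Lpq}$ directly on a characteristic function supported on a dyadic cube near the singularity. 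The main obstacle lies in verifying that the rearrangement-type structure of $\Lpq$ genuinely forces the $1/q$ exponent in the regime $q<p$, rather than the weaker $1/p$ exponent that governs the Lebesgue case; this is the content of the dedicated Section~\ref{optimality}, where separate families of weights are needed to saturate each exponent.
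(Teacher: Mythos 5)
Part 1) of your proposal is correct and follows exactly the paper's route: apply Theorem~\ref{thm:main} to $T=\M$ with $v=w$, use $\M 1 = 1$ for hypothesis (i) with $A=1$, and use the openness property (Corollary~\ref{Apselfimprovetwoweight}) with $r'=c_np'[\sigma]_{A_\infty}$ together with the two-weight weak-type characterization \eqref{CharactApWeakp,p} to verify hypothesis (ii) and to control $\|\M\|^{1/r}_{L^{p/r}(w)\to L^{p/r,\infty}(w)}$ by $[w]_{A_p}^{1/p}$. (Small refinement: the constant $2^{1/p'}$ arising from $[w]_{A_{p/r}}\le 2^{p-1}[w]_{A_p}$ is bounded by $2$, so the implicit constant can be made independent of $p$ rather than ``$\lesssim_p$''.)

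Part 2) is not proved: you state a plan and explicitly defer the details to Section~\ref{optimality} of the paper, which begs the question. Moreover, the plan as stated has a genuine gap in the regime $q<p$. You propose to use a one-parameter family of \emph{power} weights together with a characteristic function near the singularity. That does not saturate the bound. With $w_\delta(x)=|x|^{(p-1)(1-\delta)}$ and $f=\chi_{[0,1]}$, one computes $\|f\|_{L^{p,q}(w_\delta)}\approx 1$ while $\|\M f\|_{L^{p,q}(w_\delta)}\approx\delta^{-1/q}$; but the upper bound to be matched is $[w_\delta]_{A_p}^{1/p}[\sigma_\delta]_{A_\infty}^{1/q}\approx\delta^{-1/p'}\delta^{-1/q}$, so the ratio falls short by a factor of $\delta^{-1/p'}$. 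The paper's fix is to \emph{truncate} the weight, taking $w_\delta=\delta^{p-1}\chi_{[-1,1]}+|x|^{(1-\delta)(p-1)}\chi_{[-1,1]^c}$, which keeps $[w_\delta]_{A_p}\approx\delta^{-(p-1)}$ and $[\sigma_\delta]_{A_\infty}\lesssim\delta^{-1}$ but drops $\|\chi_{[0,1]}\|_{L^{p,q}(w_\delta)}$ to $\approx\delta^{1/p'}$, so the quotient $\|\M f\|/\|f\|\gtrsim\delta^{-1/q-1/p'}$ now matches the upper bound. You do not identify this modification, and without it the optimality argument for $q<p$ fails. (For $p\le q$ your suggestion of Buckley's example with $f_\delta=|x|^{-1+\delta}\chi_{[0,1]}$ is correct, though you should note that $f_\delta\in L^{p,q}(w_\delta)$ follows from $L^p(w_\delta)\subset L^{p,q}(w_\delta)$, which only holds in that range of $q$.)
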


We will actually prove the following more precise two-weight version of the first part of Corollary \ref{MixedApAinftyLpq}:

\begin{corollary}\label{thm:maintwoweight} 
Let $p\in(1,\infty)$, $q\in(0,\infty]$ and let $(v,w)\in A_p$ with $\sigma:=w^{1-p'}\in A_\infty$.  Then
\begin{align*}
\|\M\|_{L^{p,q}(w)\to L^{p,q}(v)}
\lesssim\begin{cases}
c_np'[v,w]^{\frac{1}{p}}_{A_p}[\sigma]_{A_\infty}^{\frac{1}{p}} & \text{if $p\leq q\leq\infty$;}\\
\big(\frac{4c_n}{q}\big)^{\frac{1}{q}}(p')^{\frac{1}{q}}[v,w]^{\frac{1}{p}}_{A_p}[\sigma]_{A_\infty}^{\frac{1}{q}} & \text{if $0<q\leq p$.}
\end{cases}
\end{align*}
\end{corollary}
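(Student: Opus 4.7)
The plan is to apply Theorem \ref{thm:main} directly to $T=\M$, choosing $r\in(1,p)$ via the openness property. The two hypotheses of Theorem \ref{thm:main} are immediate for $\M$: first, $\M 1=1$, so condition \eqref{enum:op1} holds with $A=1$; second, the two-weight weak-type Muckenhoupt characterization recalled in the introduction yields
\[
\|\M\|_{L^{p/r}(w)\to L^{p/r,\infty}(v)}\leq c_n[v,w]_{A_{p/r}}^{r/p},
\]
whenever $(v,w)\in A_{p/r}$, which gives condition \eqref{enum:op2}. So all that remains is to choose $r$ wisely and bookkeep constants.

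The crucial choice is to take $r\in(1,p)$ so that $r'=c_np'[\sigma]_{A_\infty}$, where $c_n$ is the dimensional constant appearing in Corollary \ref{Apselfimprovetwoweight}. That corollary then guarantees $(v,w)\in A_{p/r}$ with $[v,w]_{A_{p/r}}\leq 2^{p-1}[v,w]_{A_p}$, so taking $\frac1r$-th powers of the weak-type bound above gives
\[
\|\M\|_{L^{p/r}(w)\to L^{p/r,\infty}(v)}^{1/r}\leq c_n^{1/r}[v,w]_{A_{p/r}}^{1/p}\lesssim c_n\,[v,w]_{A_p}^{1/p},
\]
since $c_n^{1/r}\leq c_n$ and $2^{(p-1)/p}\leq 2$.

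Substituting this estimate into both cases of the conclusion of Theorem \ref{thm:main} and then replacing $r'$ by $c_np'[\sigma]_{A_\infty}$ completes the proof. In the regime $p\leq q\leq\infty$ one gets a bound of order $(r')^{1/p}[v,w]_{A_p}^{1/p}=(c_np')^{1/p}[\sigma]_{A_\infty}^{1/p}[v,w]_{A_p}^{1/p}$, which is at most $c_np'[v,w]_{A_p}^{1/p}[\sigma]_{A_\infty}^{1/p}$ since $(c_np')^{1/p}\leq c_np'$ (both factors being at least $1$). In the regime $0<q\leq p$ one instead obtains a bound of order $(4r'/q)^{1/q}[v,w]_{A_p}^{1/p}$, which factors cleanly as $(4c_n/q)^{1/q}(p')^{1/q}[\sigma]_{A_\infty}^{1/q}[v,w]_{A_p}^{1/p}$, matching the statement. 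The main obstacle is thus purely one of bookkeeping; all substantive content is carried by Theorem \ref{thm:main} (whose own proof contains the real work) together with the sharp reverse H\"older inequality of Theorem \ref{thm:SharpRHI} encoded through Corollary \ref{Apselfimprovetwoweight}.
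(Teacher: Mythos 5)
Your proposal is correct and follows essentially the same route as the paper: apply Theorem \ref{thm:main} with $T=\M$ and $A=1$, choose $r$ via $r'=c_np'[\sigma]_{A_\infty}$, invoke Corollary \ref{Apselfimprovetwoweight} to pass from $[v,w]_{A_{p/r}}$ to $[v,w]_{A_p}$, and use the two-weight weak-type characterization $\|\M\|_{L^{p/r}(w)\to L^{p/r,\infty}(v)}\approx_n [v,w]_{A_{p/r}}^{r/p}$ to supply hypothesis \eqref{enum:op2}. The bookkeeping with $(r')^{1/p}$ and $(r')^{1/q}$ is carried out exactly as in the paper.
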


Our method of proof is completely different from the one in \cite{CHK}. Their method is based on  the concept of rearrangement, but it is not clear how to use it to derive our much more precise estimates. Our method is more transparent and it is in the spirit of the proof of \cite{HPR} using the reverse H\"older property of the $A_{\infty}$ weight $\sigma$.

As promised, we also analyze the case of the strong maximal operator. From Theorem \ref{thm:main} we can deduce the following corollary:

\begin{corollary} \label{MixedApAinftyLpqStrg}  Let $p\in(1,\infty)$, $q\in (0,\infty]$,  and let $w\in A_{p,\cR}$ with $\sigma:=w^{1-p'}$. Then
\[
\|\M_\cR\|_{L^{p,q}(w)}
\lesssim_{q,n}
(p')^{n-1+\frac1{\min(p,q)}} \log(e+[w]_{A_{p,\cR}})^{\frac1{\min(p,q)}}[w]^{\frac{1}{p}+\frac{n-1}{p-1}}_{A_{p,\cR}}[\sigma]_{A_{\infty,\cR}}^{\frac1{\min(p,q)}}.
\]

\end{corollary}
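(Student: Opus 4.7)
The plan is to apply Theorem~\ref{thm:main} to the positive sublinear operator $T=\M_\cR$ with $v=w$. Since $\M_\cR 1\equiv 1$, the hypothesis $|T1|\leq A\cdot 1$ holds with $A=1$. For the quantitative weak-type hypothesis I would feed in the sharp bound \eqref{WeakLp-p-Estimate} with exponent $p/r$ in place of $p$, for some $r\in(1,p)$ to be chosen:
\[
\|\M_\cR\|_{L^{p/r}(w)\to L^{p/r,\infty}(w)}\leq c_n\Big(\big(\tfrac{p}{r}\big)'\Big)^{n-1}[w]_{A_{p/r,\cR}}^{\frac{r}{p}+\frac{(n-1)r}{p-r}}.
\]
After taking the $1/r$-th root, as dictated by Theorem~\ref{thm:main}, the exponent of $[w]_{A_{p/r,\cR}}$ becomes $\tfrac1p+\tfrac{n-1}{p-r}$.

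To convert $[w]_{A_{p/r,\cR}}$ into $[w]_{A_{p,\cR}}$, I would invoke Corollary~\ref{ApselfimproveStrong} (openness): letting $r_0$ be determined by $r_0'=2^{n+1}p'[\sigma]_{A_{\infty,\cR}}$, one has $[w]_{A_{p/r_0,\cR}}\leq 2^{p-1}[w]_{A_{p,\cR}}$, and by monotonicity of the $A_{p,\cR}$ classes the same bound holds for every $r\in(1,r_0]$. The identity
\[
\tfrac{n-1}{p-r}=\tfrac{n-1}{p-1}+\tfrac{(n-1)(r-1)}{(p-r)(p-1)}
\]
splits the exponent of $[w]_{A_{p,\cR}}$ into the desired $\tfrac1p+\tfrac{n-1}{p-1}$ plus an excess factor $[w]_{A_{p,\cR}}^{(n-1)(r-1)/((p-r)(p-1))}$ that has to be absorbed. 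I would choose $r\in(1,r_0]$ so that $r'$ is of order $p'\big([\sigma]_{A_{\infty,\cR}}+\log(e+[w]_{A_{p,\cR}})\big)$: this makes $r-1\leq(p-1)/2$ (hence $(p/r)'\leq 2p'$ and $((p/r)')^{(n-1)/r}\lesssim (p')^{n-1}$), and forces $(r-1)\log(e+[w]_{A_{p,\cR}})$ to stay controlled, so that the excess factor is dominated by a constant.

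Putting the pieces together in both cases of Theorem~\ref{thm:main}, the factor $(r')^{1/\min(p,q)}$ is bounded via $(a+b)^{1/\min(p,q)}\leq a^{1/\min(p,q)}+b^{1/\min(p,q)}$ together with $[\sigma]_{A_{\infty,\cR}},\log(e+[w]_{A_{p,\cR}})\geq 1$ by
\[
(r')^{1/\min(p,q)}\lesssim(p')^{1/\min(p,q)}[\sigma]_{A_{\infty,\cR}}^{1/\min(p,q)}\log(e+[w]_{A_{p,\cR}})^{1/\min(p,q)},
\]
which combined with the $(p')^{n-1}$ coming from $((p/r)')^{(n-1)/r}$, the openness constant $2^{p-1}$, and the power $[w]_{A_{p,\cR}}^{1/p+(n-1)/(p-1)}$ gives the claim. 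The main obstacle is the simultaneous balancing of $r$: the openness constraint requires $r'\gtrsim p'[\sigma]_{A_{\infty,\cR}}$, whereas absorbing the excess factor requires $r'\gtrsim p'\log(e+[w]_{A_{p,\cR}})$; one must choose $r$ so that both conditions are met while verifying that the extra multiplicative factors from openness, from $(p/r)'$, and from the excess exponent all recombine to give precisely the exponents stated in the corollary, without leaking additional powers of $p'$ or $(p-1)^{-1}$.
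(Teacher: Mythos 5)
Your proposal is essentially the paper's own proof: apply Theorem~\ref{thm:main} with $T=\M_\cR$, $A=1$, feed in the sharp weak-type estimate \eqref{WeakLp-p-Estimate} at scale $p/r$, invoke the openness property Corollary~\ref{ApselfimproveStrong} to pass from $[w]_{A_{p/r,\cR}}$ to $[w]_{A_{p,\cR}}$, split the resulting exponent $\tfrac1p+\tfrac{n-1}{p-r}$ into $\tfrac1p+\tfrac{n-1}{p-1}$ plus an excess, and then calibrate $r$ so that the excess exponent is $O(1/\log(e+[w]_{A_{p,\cR}}))$, concluding via $[w]^{1/\log(e+[w])}\leq e$. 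The only difference is cosmetic: the paper takes $r'=tp'[\sigma]_{A_{\infty,\cR}}$ with $t=2^{n+1}\log(e+[w]_{A_{p,\cR}})+1$ (a product), whereas you propose $r'\approx p'\bigl([\sigma]_{A_{\infty,\cR}}+\log(e+[w]_{A_{p,\cR}})\bigr)$ (a sum); since both summands are $\geq 1$ these two choices are comparable up to dimensional factors, both make $r\leq r_0$ so that openness applies and force $r'-p'\gtrsim p'\log(e+[w]_{A_{p,\cR}})$, which is exactly what is needed to control the excess factor.
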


Observe that this result is better than the case $p=q$ in \eqref{eq:StrongHLLpAinfty}. As a matter of fact, since
\[
[w]^{\frac{1}{p}+\frac{n-1}{p-1}}_{A_{p,\cR}}[\sigma]_{A_{\infty,\cR}}^{\frac1p}\lesssim[w]^{\frac{n}{p-1}}_{A_{p,\cR}},
\]
we only lose a $\log$-term in our estimate compared to \eqref{Lp-pEstimate}.We believe that in the non mixed situation, the results  \eqref{Lp-pEstimate} and \eqref{WeakLp-p-Estimate} are sharp. This loss of exponent compared to the cubic case could be related to the lack of sparse domination in this situation as shown in \cite{BCOR}.

Finally, we consider the dual problem to Corollary~\ref{thm:maintwoweight}. We obtain the following result:
\begin{corollary}\label{thm:maindual}
Let $p,q\in(1,\infty)$ and let $(v,w)\in A_1$ with $w\in A_1$.  Then the operator $Tf:=\frac{\M(fv)}{w}$ satisfies
\begin{align*}
\|T\|_{L^{p',q'}(v)\to L^{p',q'}(w)}
\lesssim_n
p^{\frac{1}{\min(p',q')}}\log(e+[v,w]_{A_1})^{\frac{1}{\min(p',q')}}(1+ [v,w]_{A_1})[w]_{A_1}^{\frac{1}{p'}}.
\end{align*}
\end{corollary}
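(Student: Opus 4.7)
The plan is to apply Theorem~\ref{thm:main} to $Tf=\M(fv)/w$ with source Lorentz indices $(p',q')$, source weight $v$, and target weight $w$. Condition~(i) is immediate from the hypothesis $(v,w)\in A_1$, which yields $\M v\le [v,w]_{A_1}w$ a.e.\ and therefore $T\mathbf{1}=\M v/w\le [v,w]_{A_1}$ a.e.; we may thus take $A:=[v,w]_{A_1}$, which reproduces the $(1+[v,w]_{A_1})$-factor of the statement.

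For condition~(ii) I would fix $s\in(1,p')$, set $r:=p'/s\in(1,p')$, and derive a strong $L^s$-bound for $T$ (strong type implies the weak type the theorem demands). The isometry $\|Tf\|_{L^s(w)}=\|\M(fv)\|_{L^s(w^{1-s})}$ combined with Buckley's estimate \eqref{buckley0} gives $\|\M\|_{L^s(w^{1-s})}\lesssim_n s'\,[w^{1-s}]_{A_s}^{1/(s-1)}$. Using $w\in A_1$ and $\essinf_Q w\ge(\avgint_Q w)/[w]_{A_1}$, a short calculation gives $[w^{1-s}]_{A_s}\le [w]_{A_1}^{s-1}$, so $\|\M\|_{L^s(w^{1-s})}\lesssim_n s'\,[w]_{A_1}$. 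On the input side, $(v,w)\in A_1$ yields $v\le [v,w]_{A_1}w$ a.e., whence $\|fv\|_{L^s(w^{1-s})}\le [v,w]_{A_1}^{1/s'}\|f\|_{L^s(v)}$. Combining,
\[
\|T\|_{L^s(v)\to L^s(w)}\lesssim_n s'\,[w]_{A_1}\,[v,w]_{A_1}^{1/s'}.
\]

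Plugging this into Theorem~\ref{thm:main} with $1/r=s/p'$ gives a bound of the form
\[
\|T\|_{L^{p',q'}(v)\to L^{p',q'}(w)}\lesssim (1+[v,w]_{A_1})\,C(r,q')\,(s')^{s/p'}\,[w]_{A_1}^{s/p'}\,[v,w]_{A_1}^{(s-1)/p'},
\]
with $C(r,q')=(r')^{1/p'}$ if $q\le p$ and $(4r'/q')^{1/q'}$ if $p\le q$. Since $r'=p'/(p'-s)\to p$ as $s\downarrow 1$, the factor $C(r,q')$ supplies the desired $p^{1/\min(p',q')}$-term. The delicate choice is $s-1:=1/\log(e+[v,w]_{A_1})$: it bounds $[v,w]_{A_1}^{(s-1)/p'}$ by a constant, and converts $(s')^{s/p'}$ into $\log(e+[v,w]_{A_1})^{1/p'}\le \log(e+[v,w]_{A_1})^{1/\min(p',q')}$ (since $1/p'\le 1/\min(p',q')$). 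The main obstacle is then to tame $[w]_{A_1}^{s/p'}=[w]_{A_1}^{1/p'}\cdot[w]_{A_1}^{(s-1)/p'}$: the residual $[w]_{A_1}^{(s-1)/p'}$, with exponent $1/(p'\log(e+[v,w]_{A_1}))$, must be absorbed into the advertised $(1+[v,w]_{A_1})[w]_{A_1}^{1/p'}$ portion of the bound, and this careful bookkeeping---exploiting the joint structure of $w\in A_1$ and $(v,w)\in A_1$---is the technical heart of the argument.
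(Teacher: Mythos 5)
Your handling of condition~\ref{enum:op1} is exactly the paper's: $T\mathbf{1}=\M v/w\le[v,w]_{A_1}$ so one takes $A=[v,w]_{A_1}$. However, your treatment of condition~\ref{enum:op2} has a genuine gap, and the ``bookkeeping'' you flag at the end is in fact an insurmountable obstruction, not a technicality. Your route (isometry $\|Tf\|_{L^s(w)}=\|\M(fv)\|_{L^s(w^{1-s})}$ plus Buckley plus $[w^{1-s}]_{A_s}\le[w]_{A_1}^{s-1}$) yields
\[
\|T\|_{L^s(v)\to L^s(w)}\lesssim_n s'\,[w]_{A_1}\,[v,w]_{A_1}^{1/s'},
\]
where the exponent on $[w]_{A_1}$ is $1$, not $1/s$. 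After raising to the power $1/r=s/p'$, this produces $[w]_{A_1}^{s/p'}$, and the excess factor $[w]_{A_1}^{(s-1)/p'}$ simply cannot be absorbed by choosing $s$ in terms of $[v,w]_{A_1}$: the two constants $[w]_{A_1}$ and $[v,w]_{A_1}$ are independent of each other (for instance, taking $v=\varepsilon w$ with $\varepsilon\to 0$ drives $[v,w]_{A_1}$ down while $[w]_{A_1}$ stays arbitrarily large), so $[w]_{A_1}^{1/(p'\log(e+[v,w]_{A_1}))}$ is unbounded over the class of admissible weight pairs. No choice of $s$ depending only on $[v,w]_{A_1}$ (as it must, to match the advertised logarithmic factor) can kill this term.

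The paper avoids this entirely by \emph{not} passing through a strong-type estimate and Buckley. Instead it derives the weak-type bound directly from the pointwise H\"older estimate
\[
\M^c(fv)\leq \M_{p'}^{c,w}\Big(f\tfrac{v^{1/p'}}{w^{1/p'}}\Big)\,(\M^c w)^{1/p'}(\M^c v)^{1/p}
\le [w]_{A_1}^{1/p'}[v,w]_{A_1}^{1/p}\,\M_{p'}^{c,w}\Big(f\tfrac{v^{1/p'}}{w^{1/p'}}\Big)\,w,
\]
together with the dimensional $L^{p'}(w)\to L^{p',\infty}(w)$ bound for the normalized centered operator $\M_{p'}^{c,w}$ (Lemma~\ref{lem:maindual}, Besicovitch). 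Crucially this gives
\[
\|T\|_{L^s(v)\to L^{s,\infty}(w)}\lesssim_n [w]_{A_1}^{1/s}[v,w]_{A_1}^{1/s'},
\]
with $[w]_{A_1}$ carrying exponent $1/s$ rather than $1$; raising to $1/r=s/p'$ turns this into exactly $[w]_{A_1}^{1/p'}$ with no residual, and only the $[v,w]_{A_1}$-exponent $1/r-1/p'$ depends on the auxiliary parameter, which is then tuned as $\tfrac{1}{r}=\tfrac{1}{p'}+\tfrac{1}{2p\log(e+[v,w]_{A_1})}$. Note also that the isometry you invoke between $T$ on $L^s(w)$ and $\M$ on $L^s(w^{1-s})$ fails at the level of weak $L^s$ norms (the level sets do not transform), which is another reason this detour through the strong type is not the right move; the paper's weighted-maximal-operator argument produces the weak bound natively.
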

\begin{remark}
We note that in the case $v=w$, $p=q$, $T$ is bounded on $L^{p'}(w)$ if and only if $\M$ is bounded on $L^{p'}(w^{1-p'})$. This is equivalent to the assumption $w^{1-p'}\in A_{p'}$ (or equivalently, $w\in A_p)$,  and hence, does not require $w\in A_1$. 
It is not clear if it is true in the general case $p\neq q$ if $T$ is bounded under the weaker assumption $w\in A_p$. We note here that property (i) in Theorem~\ref{thm:main} for this operator in this case is equivalent to the condition $w\in A_1$ and hence, other methods would be required to study this problem.
\end{remark}

We finish this section by mentioning that as usual we write $A \lesssim B$ if there exists a positive constant $C>0$, independent of $A$ and $B$, such that $A\leq C B$. If the implicit constant $C$ depends on some parameter $\alpha$, we may write $\lesssim_\alpha$ at our discretion. If $A\lesssim B$ and $B\lesssim A$, then we write $A\approx B$.

\section{Proof of the main results}	

For the proof of Theorem~\ref{thm:main} we first prove a lemma that will be used for the cases $0<q<p$.
\begin{lemma}\label{lem:qlep}
Let $\theta\in(0,1]$,  $t\in(0,\infty)$, and let $\phi$ be increasing and $\psi$ be decreasing. Then
\[
\left(\int_t^\infty\!\phi(s)\psi(s)\,\frac{\mathrm{d}s}{s}\right)^\theta\leq(\log 2)^{-(1-\theta)}\int_{\frac{t}{2}}^\infty\!\phi(4s)^\theta\psi(s)^\theta\,\frac{\mathrm{d}s}{s}.
\]
\end{lemma}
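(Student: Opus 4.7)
The case $\theta=1$ is essentially immediate: since $\phi$ is increasing and $\psi\geq 0$, we have $\phi(s)\psi(s)\leq\phi(4s)\psi(s)$ and the range on the right-hand side is enlarged, so I would simply note this case and focus on $\theta\in(0,1)$, where the factor $(\log 2)^{-(1-\theta)}$ hints at a dyadic discretization.

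My plan is to dyadically split the integral on $[t,\infty)$ as
\[
\int_t^\infty\phi(s)\psi(s)\,\frac{\mathrm{d}s}{s}=\sum_{k=0}^{\infty}\int_{2^kt}^{2^{k+1}t}\phi(s)\psi(s)\,\frac{\mathrm{d}s}{s},
\]
and then use monotonicity of $\phi$ and $\psi$ on each dyadic annulus to bound the integrand pointwise by $\phi(2^{k+1}t)\psi(2^kt)$. Since each annulus has logarithmic length $\log 2$, this yields
\[
\int_t^\infty\phi(s)\psi(s)\,\frac{\mathrm{d}s}{s}\leq(\log 2)\sum_{k=0}^{\infty}\phi(2^{k+1}t)\psi(2^kt).
\]
Now, because $\theta\in(0,1]$, I can use subadditivity of $x\mapsto x^\theta$ on nonnegative sequences, i.e.\ $(\sum a_k)^\theta\leq\sum a_k^\theta$, to obtain
\[
\left(\int_t^\infty\phi(s)\psi(s)\,\frac{\mathrm{d}s}{s}\right)^{\!\theta}\leq(\log 2)^\theta\sum_{k=0}^{\infty}\phi(2^{k+1}t)^\theta\psi(2^kt)^\theta.
\]

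The last step is to turn this discrete sum back into an integral, again via monotonicity, but over the shifted dyadic grid $[2^{k-1}t,2^kt]$. For $s\in[2^{k-1}t,2^kt]$ one has $4s\in[2^{k+1}t,2^{k+2}t]$, so $\phi(4s)^\theta\geq\phi(2^{k+1}t)^\theta$ and $\psi(s)^\theta\geq\psi(2^kt)^\theta$; integrating over this interval of logarithmic length $\log 2$ gives
\[
\phi(2^{k+1}t)^\theta\psi(2^kt)^\theta\leq\frac{1}{\log 2}\int_{2^{k-1}t}^{2^kt}\phi(4s)^\theta\psi(s)^\theta\,\frac{\mathrm{d}s}{s}.
\]
Summing over $k\geq 0$ telescopes the intervals into $[t/2,\infty)$, so combining everything yields
\[
\left(\int_t^\infty\phi(s)\psi(s)\,\frac{\mathrm{d}s}{s}\right)^{\!\theta}\leq(\log 2)^{\theta-1}\int_{t/2}^{\infty}\phi(4s)^\theta\psi(s)^\theta\,\frac{\mathrm{d}s}{s},
\]
which is exactly the claimed inequality.

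There is no serious obstacle here; the only delicate point is bookkeeping the dyadic shifts, which is precisely what forces the lower limit $t/2$ on the right and the dilation $\phi(4s)$ rather than $\phi(s)$: the left bound on each annulus loses a factor of $2$, and aligning the discrete values $\phi(2^{k+1}t)$ with the pointwise lower bound $\phi(4s)$ on the shifted annulus $[2^{k-1}t,2^kt]$ loses an additional factor of $2$. The constant $(\log 2)^{-(1-\theta)}$ arises as the product of the $(\log 2)^\theta$ from the first dyadic step and the $(\log 2)^{-1}$ from the second, so it is sharp for this argument.
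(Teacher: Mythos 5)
Your proof is correct and follows essentially the same argument as the paper: dyadic decomposition of $[t,\infty)$, the pointwise bound on each annulus, subadditivity of $x\mapsto x^\theta$ (the paper phrases this as $\|\cdot\|_{\ell^\theta}\leq\|\cdot\|_{\ell^1}$), and a re-conversion of the discrete sum to an integral via monotonicity. The only cosmetic difference is that you integrate directly over the shifted annuli $[2^{k-1}t,2^kt]$ with $\phi(4s)\psi(s)$, whereas the paper integrates over $[2^kt,2^{k+1}t]$ with $\phi(2s)\psi(s/2)$ and then substitutes $s\mapsto 2s$; these are identical computations.
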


\begin{proof}

Since $\phi$ is increasing and $\psi$ is decreasing, it follows that for $s\in (a,2a)$ we have $\phi(a)\leq\phi(s)\leq\phi(2a)\leq\phi(2s)$ and $\psi(2a)\leq\psi(s)\leq\psi(a)\leq\psi(s/2)$. Hence, using the fact that $\|\cdot\|_{\ell^{\theta}}\leq\|\cdot\|_{\ell^1}$, we have
\begin{align*}
&\left(\int_t^\infty\!\phi(s) \psi(s)\,\ds\right)^\theta=\left(\sum_{k=0}^\infty\int_{2^kt}^{2^{k+1}t}\!\phi(s)\psi(s)\,\ds\right)^\theta\\
&\qquad \leq\left(\sum_{k=0}^\infty\phi(2^{k+1}t)\psi(2^kt)\log 2\right)^\theta
\leq(\log 2)^\theta\sum_{k=0}^\infty\phi(2^{k+1}t)^\theta\psi(2^kt)^\theta\\
&\qquad=(\log 2)^{\theta-1}\sum_{k=0}^\infty\int_{2^kt}^{2^{k+1}t}\!\phi(2^{k+1}t)^\theta\psi(2^kt)^\theta\,\ds\\
&\qquad\leq(\log 2)^{\theta-1}\int_t^\infty\!\phi(2s)^\theta\psi(2^{-1}s)^\theta\,\ds
=(\log 2)^{\theta-1}\int_{\frac{t}{2}}^\infty\!\phi(4s)^\theta\psi(s)^\theta\,\ds.
\end{align*}

\end{proof}

\begin{proof}[Proof of Theorem~\ref{thm:main}]
For $t>0$ we set $f_t:=f\chi_{\{|f|>t\}}$. Then, since $|f|=|f_t|+|f|\chi_{\{|f|\leq t\}}\leq|f_t|+t$, it follows from property \ref{enum:op1} that $|T f|\leq T(|f_t|)+At$ and thus $\{|T f|>(1+A)t\}\subseteq\{T(|f_t|)>t\}$. We also note that for $s>0$ we have $|f_t|>s$ if and only if $|f|>\max\{t,s\}$. 

Next, by property \ref{enum:op2} we may set
\begin{equation*}
N:=\|T\|^{\frac{1}{r}}_{L^{p/r}(w)\to L^{p/r,\infty}(v)}<\infty.
\end{equation*}
Then, fixing $t>0$, we have
\begin{align*}
& t\big(v\{|Tf|>(1+A)t\}\big)^{\frac{1}{p}}
\leq t\big(v\{T(|f_t|)>t\}\big)^{\frac{1}{p}}
\leq t\big(\frac{N^p}{t^{\frac{p}{r}}}\|f_t\|_{L^{\frac{p}{r}}(w)}^{\frac{p}{r}}\big)^{\frac{1}{p}}\\
&\qquad \leq Nt^{\frac{1}{r'}}\left(\frac{p}{r}\int_0^\infty\!s^{\frac{p}{r}}w\{|f_t|>s\}\,\ds\right)^{\frac{1}{p}}\\
&\qquad = Nt^{\frac{1}{r'}}\Big(\frac{p}{r}\Big)^{\frac{1}{p}}\left(\int_0^\infty\!s^{\frac{p}{r}}w\{|f|>\max\{s,t\}\}\,\ds\right)^{\frac{1}{p}}\\
&\qquad =Nt^{\frac{1}{r'}}\Big(\frac{p}{r}\Big)^{\frac{1}{p}}\left(w\{|f|>t\}\int_0^t\!s^{\frac{p}{r}}\,\frac{\mathrm{d}s}{s}+\int_t^\infty\!s^{\frac{p}{r}}w\{|f|>s\}\,\ds\right)^{\frac{1}{p}}\\
&\qquad =N\Big(\frac{p}{r}\Big)^{\frac{1}{p}}\left(\frac{r}{p}t^pw\{|f|>t\}+t^{\frac{p}{r'}}\int_t^\infty\!s^{\frac{p}{r}}w\{|f|>s\}\,\ds\right)^{\frac{1}{p}}.
\end{align*}
Hence, by the (quasi-)triangle inequality, 
\begin{equation}\label{eq:insidetwowlorentz}
\begin{split}
\|&Tf\|_{L^{p,q}(v)}=p^{\frac{1}{q}}\|(1+A)t\big(v\{|Tf|>(1+A)t\}\big)^{\frac{1}{p}}\|_{L^q((0,\infty),\frac{\mathrm{d}t}{t})}\\
&\leq (1+A)N\Big(\frac{p}{r}\Big)^{\frac{1}{p}}p^{\frac{1}{q}}\Big\|\frac{r}{p}t^pw\{|f|>t\}+t^{\frac{p}{r'}}\int_t^\infty\!s^{\frac{p}{r}}w\{|f|>s\}\,\frac{\mathrm{d}s}{s}\Big\|^{\frac{1}{p}}_{L^{\frac{q}{p}}((0,\infty),\frac{\mathrm{d}t}{t})}\\
&\leq2^{(\frac{1}{q}-\frac{1}{p})_+}(1+A)N\left(\|f\|^p_{L^{p,q}(w)}
+\frac{p}{r}p^{\frac{p}{q}}\Big\|t^{\frac{p}{r'}}\int_t^\infty\!s^{\frac{p}{r}}w\{|f|>s\}\,\frac{\mathrm{d}s}{s}\Big\|_{L^{\frac{q}{p}}((0,\infty),\frac{\mathrm{d}t}{t})}\right)^{\frac{1}{p}}\\
&=:2^{(\frac{1}{q}-\frac{1}{p})_+}(1+A)N\left(\|f\|^p_{L^{p,q}(w)}
+I\right)^{\frac{1}{p}}. 
\end{split}
\end{equation}
 (The notation $a_+$ stands for $\max (a,0)$.) We now separate the cases $p\leq q\leq\infty$ and $0<q\leq p$. 

\textit{Case $p\leq q\leq\infty$.} 
Using Minkowski's integral inequality we have
\begin{equation*}
\aligned
I & = \frac{p}{r}p^{\frac{p}{q}}\left(\int_0^\infty t^{\frac{q}{r'}}\left(\int_t^\infty s^{\frac{p}{r}}w\{|f|>s\}\,\ds\right)^{\frac qp} \dt \right)^{\frac pq} \\
&=  \frac{p}{r}p^{\frac{p}{q}}\left(\int_0^\infty t^{\frac{q}{r'}}\left(\int_1^\infty (st)^{\frac{p}{r}}w\{|f|>st\}\,\ds\right)^{\frac qp} \dt \right)^{\frac pq} \\
& \leq\frac{p}{r}p^{\frac{p}{q}} \int_1^\infty s^{\frac{p}{r}} \left(\int_0^\infty t^{\frac{q}{r'}+\frac pr\frac qp} w\{|f|>st\}^{\frac qp}\dt\right)^{\frac pq} \ds \\
&= \frac{p}{r}p^{\frac{p}{q}} \int_1^\infty s^{\frac{p}{r}} \left(\int_0^\infty  \left(\frac ts\right)^q  w\{|f|>t\}^{\frac qp}\dt\right)^{\frac pq} \ds\\
&= \frac{p}{r}p^{\frac{p}{q}} \left(\int_1^\infty s^{-\frac{p}{r'}} \ds\right)\, \frac 1{p^{\frac pq}} \|f\|_{L^{p,q}(w)}^p =(r'-1)\|f\|_{L^{p,q}(w)}^p.
\endaligned
\end{equation*}

Combining this with \eqref{eq:insidetwowlorentz} yields
\[
\|Tf\|_{L^{p,q}(v)}\leq (1+A)N(r')^{\frac{1}{p}}\|f\|_{L^{p,q}(w)},
\]
as asserted.

\textit{Case $0<q\leq p$.}  Applying Lemma~\ref{lem:qlep} with $\theta=\frac{q}{p}$, $\phi(s)=s^p$, $\psi(s)=s^{-\frac{p}{r'}}w\{|f|>s\}$ yields
\begin{align*}
I&=\frac{p}{r}p^{\frac{p}{q}}\left(\int_0^\infty\!t^{\frac{q}{r'}}\left(\int_t^\infty\!s^ps^{-\frac{p}{r'}}w\{|f|>s\}\,\frac{\mathrm{d}s}{s}\right)^{\frac{q}{p}}\,\frac{\mathrm{d}t}{t}\right)^{\frac{p}{q}}\\
&\leq(\log 2)^{-(\frac{p}{q}-1)}4^p\frac{p}{r}p^{\frac{p}{q}}\left(\int_0^\infty\!t^{\frac{q}{r'}}\int_{\frac{t}{2}}^\infty\!\big(s^ps^{-\frac{p}{r'}}w\{|f|>s\}\big)^{\frac{q}{p}}\,\frac{\mathrm{d}s}{s}\,\frac{\mathrm{d}t}{t}\right)^{\frac{p}{q}}\\
&=(\log 2)^{-(\frac{p}{q}-1)}4^p\frac{p}{r}p^{\frac{p}{q}}\left(\int_0^\infty\!s^{-\frac{q}{r'}}\big(s^pw\{|f|>s\}\big)^{\frac{q}{p}}\int_0^{2s}\!t^{\frac{q}{r'}}\,\frac{\mathrm{d}t}{t}\,\frac{\mathrm{d}s}{s}\right)^{\frac{p}{q}}\\
&=(\log 2)^{-(\frac{p}{q}-1)}4^p2^{\frac{p}{r'}}\frac{p}{r}\left(\frac{r'}{q}\right)^{\frac{p}{q}}\|f\|^p_{L^{p,q}(w)}. 
\end{align*}
Hence, by combining this with \eqref{eq:insidetwowlorentz} and noting that $(\log 2)^{-1}\leq 2$ and $p^{1/p}\leq e^{1/e}$, we have
\[
\|Tf\|_{L^{p,q}(v)}\lesssim (1+A)N\left(\frac{4r'}{q}\right)^{\frac{1}{q}} \|f\|_{L^{p,q}(w)}. 
\]
The assertion follows. 
\end{proof}

\begin{proof}[Proof of Corollary \ref{thm:maintwoweight}]
By Corollary~\ref{Apselfimprovetwoweight}  we have 
\[
\|\M\|^{\frac{1}{r}}_{L^{p/r}(w)\to L^{p/r,\infty}(v)}\approx_n [v,w]^{\frac{1}{p}}_{A_{\frac{p}{r}}}\leq2^{\frac{1}{p'}}[v,w]^{\frac{1}{p}}_{A_p}
\]
for $r'=c_np'[\sigma]_{A_\infty}$.  Using that then 
\begin{equation*}
(r')^{\frac{1}{p}}=c_n^{\frac{1}{p}}(p')^{\frac{1}{p}}[\sigma]_{A_\infty}^{\frac{1}{p}}\leq c_n p'[\sigma]_{A_\infty}^{\frac{1}{p}}
\end{equation*}
 and 
 \begin{equation*}
(r')^{\frac{1}{q}}=c_n^{\frac{1}{q}}(p')^{\frac{1}{q}}[\sigma]_{A_\infty}^{\frac{1}{q}},
\end{equation*}
the assertion follows from Theorem~\ref{thm:main} applied with $T=M$, $A=1$. 
\end{proof}

Similarly,  we can prove the following more precise version of Corollary~\ref{MixedApAinftyLpqStrg} for the strong maximal operator:

\begin{corollary}\label{cor:maintstrong}
Let $p\in(1,\infty)$, $q\in(0,\infty]$ and let $w\in A_{p,\cR}$ with $\sigma:=w^{1-p'}$.  Then
\begin{align*}
&\|\M_\cR\|_{L^{p,q}(w)}\\
&\lesssim_n\begin{cases}
(p')^n \log(e+[w]_{A_{p,\cR}})^{\frac{1}{p}}[w]^{\frac{1}{p}+\frac{n-1}{p-1}}_{A_{p,\cR}}[\sigma]_{A_{\infty,\cR}}^{\frac{1}{p}} & \text{if $p\leq q\leq\infty$;}\\
\big(\frac{2^{n+3}}{q}\big)^{\frac{1}{q}}(p')^{\frac{1}{q}+(n-1)}
\log(e+[w]_{A_{p,\cR}})^{\frac{1}{q}}[w]^{\frac{1}{p}+\frac{n-1}{p-1}}_{A_{p,\cR}}[\sigma]_{A_{\infty,\cR}}^{\frac{1}{q}} & \text{if $0<q\leq p$.}
\end{cases}
\end{align*}
\end{corollary}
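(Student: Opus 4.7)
The plan is to apply Theorem~\ref{thm:main} to $T=\M_\cR$ with $v=w$. Property \ref{enum:op1} holds with $A=1$ since $\M_\cR 1\equiv 1$. For property \ref{enum:op2}, I combine the weak-type bound \eqref{WeakLp-p-Estimate} at exponent $p/r$ with the strong openness Corollary~\ref{ApselfimproveStrong}: whenever $r'\ge 2^{n+1}p'[\sigma]_{A_{\infty,\cR}}$, one has $[w]_{A_{p/r,\cR}}\le 2^{p-1}[w]_{A_{p,\cR}}$, and hence, after taking the $1/r$-th root,
\[
N:=\|\M_\cR\|_{L^{p/r}(w)\to L^{p/r,\infty}(w)}^{1/r}\le c_n^{1/r}\Big(\tfrac{p}{p-r}\Big)^{(n-1)/r}\bigl(2^{p-1}[w]_{A_{p,\cR}}\bigr)^{\frac{1}{p}+\frac{n-1}{p-r}}.
\]

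The crux of the argument is the choice of $r$. The exponent on $[w]_{A_{p,\cR}}$ exceeds the target $\frac{1}{p}+\frac{n-1}{p-1}$ by the ``excess'' $\frac{(n-1)(r-1)}{(p-1)(p-r)}$, while the prefactor $\bigl(p/(p-r)\bigr)^{(n-1)/r}$ is controlled by $(2p')^{n-1}$ provided $r\le(p+1)/2$, equivalently $r'\ge(p+1)/(p-1)$. I will take $r'$ to be a universal multiple of the maximum of the openness threshold $2^{n+1}p'[\sigma]_{A_{\infty,\cR}}$ and a log-control threshold of order $(n-1)\log(e+[w]_{A_{p,\cR}})/(p-1)^{2}$, chosen so that the excess factor $[w]_{A_{p,\cR}}^{(n-1)(r-1)/((p-1)(p-r))}$ is bounded by an absolute constant while simultaneously $r\le(p+1)/2$.

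Substituting the resulting estimate for $N$ into Theorem~\ref{thm:main} yields a bound with outer factor $(r')^{1/p}$ in the case $p\le q$ and $(4r'/q)^{1/q}$ when $0<q\le p$. Splitting $(r')^{1/\min(p,q)}$ across the two constituent pieces of $r'$, the openness piece contributes $(p')^{1/\min(p,q)}[\sigma]_{A_{\infty,\cR}}^{1/\min(p,q)}$ and the log-control piece contributes $\log(e+[w]_{A_{p,\cR}})^{1/\min(p,q)}$. Combining with the weak-type prefactor $(p')^{n-1}$ and the dimensional constant $(2^{p-1})^{\frac{1}{p}+\frac{n-1}{p-1}}\lesssim_n 1$ produces the stated estimate.

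The main technical obstacle is the careful accounting for the factor $(p-1)^{-2}$ appearing in the log-control threshold: since $(p-1)^{-1}\le p'$, this potentially contributes extra powers of $p'$ to the final bound, and verifying that these are absorbed into the stated prefactor $(p')^n$ in the case $p\le q$ (respectively $(p')^{n-1+1/q}$ in the case $q\le p$) is the essential bookkeeping required to finalize the proof. All other ingredients—the weak-type estimate, openness, and Theorem~\ref{thm:main} itself—are applied directly, and the strategy is otherwise parallel to the proof of Corollary~\ref{thm:maintwoweight}, with the sole modification that the weak estimate for $\M_\cR$ has an $n$-dependent exponent on $[w]_{A_{p,\cR}}$ that forces the introduction of the logarithmic correction.
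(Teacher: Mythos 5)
Your proposal follows essentially the same strategy as the paper's: apply Theorem~\ref{thm:main} to $T=\M_\cR$ with $A=1$ and $v=w$, feed in the weak-type bound \eqref{WeakLp-p-Estimate} at exponent $p/r$ together with the strong openness Corollary~\ref{ApselfimproveStrong}, and choose $r$ large enough that the amount by which the resulting $[w]_{A_{p,\cR}}$-exponent $\frac{1}{p}+\frac{n-1}{p-r}$ exceeds the target $\frac{1}{p}+\frac{n-1}{p-1}$ is $O(1/\log(e+[w]_{A_{p,\cR}}))$, so that the excess factor is absorbed up to the stated logarithm. This is exactly the paper's mechanism.

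Where you diverge from the paper is in the specific parametrization of $r'$ and the resulting bookkeeping. The paper sets $r'=t\,p'[\sigma]_{A_{\infty,\cR}}$ and leaves $t\geq 2^{n+1}$ as a free parameter, which it then fixes at the end to $t=2^{n+1}\log(e+[w]_{A_{p,\cR}})+1$; the advantage of this multiplicative form is that the exact identity $\big(\frac{p}{r}\big)'=\big(\frac{r'}{p'}\big)'(p'-1)+1$ cleanly yields the excess $\frac{(t'-1)(n-1)}{p(p-1)}$ and the prefactor bound $\big[\big(\frac{p}{r}\big)'\big]^{\frac{n-1}{r}}\leq 2^{n-1}(p')^{n-1}$. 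You instead take $r'$ as a universal multiple of $\max\{2^{n+1}p'[\sigma]_{A_{\infty,\cR}},\,c(n-1)\log(e+[w]_{A_{p,\cR}})/(p-1)^2\}$ and control the excess by the crude estimate $p-r\geq(p-1)/2$ enforced through $r\leq(p+1)/2$. Both routes are legitimate, but yours is coarser: the $(p-1)^{-2}$ in your threshold is larger than the $\frac{1}{p(p-1)}\approx p'(p-1)^{-2}\cdot\frac{p-1}{p}$-type denominator that the paper's identity produces, and tracking $(r')^{1/\min(p,q)}$ with your choice gives an extra power of $p'$ that is not obviously absorbed into the stated $(p')^n$ (resp.\ $(p')^{n-1+1/q}$). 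You acknowledge this as the essential bookkeeping left to verify, so the proposal is morally correct and on the right track, but it is not yet a complete proof of the constant claimed; to match the paper's constant you should replace the $p-r\geq(p-1)/2$ reduction by the exact conjugate-exponent identity and absorb the openness threshold multiplicatively into $r'$ rather than via a maximum.
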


\begin{proof}
Fix $t\geq 2^{n+1}$. Then, by \eqref{WeakLp-p-Estimate} and Corollary~\ref{ApselfimproveStrong}  we have
\begin{equation}\label{eq:maxweakeststrong}
\|\M_{\cR}\|^{\frac{1}{r}}_{L^{p/r}(w)\to L^{p/r,\infty}(w)} \lesssim_n \Big[\Big(\frac{p}{r}\Big)'\Big]^{\frac{n-1}{r}}[w]_{A_{p,\cR}}^{\frac{1}{p}+\frac{n-1}{p-r}}
\end{equation}
whenever $1<r<\infty$ satisfies $r'=tp'[\sigma]_{A_{\infty,\cR}}$. Note that then $r'/p'\geq t$ and hence
\begin{equation}\label{eq:maxweakeststrong0}
\Big(\frac{p}{r}\Big)'=\Big(\frac{r'}{p'}\Big)'(p'-1)+1\leq t'(p'-1)+1. 
\end{equation}

Since $t\geq 2$, this implies that
\begin{equation}\label{eq:maxweakeststrong1}
\Big[\Big(\frac{p}{r}\Big)'\Big]^{\frac{n-1}{r}}\leq 2^{n-1}(p')^{n-1}.
\end{equation}
Moreover, \eqref{eq:maxweakeststrong0} implies that
\begin{align*}
\frac{1}{p}+\frac{n-1}{p-r}&=\frac{1}{p}\Big(1+\Big(\frac{p}{r}\Big)'(n-1)\Big)\leq\frac{1}{p}\Big(1+\big(t'(p'-1)+1\big)(n-1)\Big)\\
&=\frac{1}{p}+\frac{n-1}{p-1}+\frac{(t'-1)(n-1)}{p(p-1)}.
\end{align*}

By combining this with \eqref{eq:maxweakeststrong} and \eqref{eq:maxweakeststrong1}, it follows from Theorem~\ref{thm:main} applied with $T=\M_{\cR}$ and $A=1$ that
\begin{align*}
\|&\M_\cR\|_{L^{p,q}(w)}
\\
&\lesssim_n \begin{cases}
(p')^n t^{\frac{1}{p}}[w]^{\frac{1}{p}+\frac{n-1}{p-1}+ \frac{(t'-1)(n-1)}{p(p-1)}}_{A_{p,\cR}}[\sigma]_{A_{\infty,\cR}}^{\frac{1}{p}} & \text{if $p\leq q\leq\infty$;}\\
4^{\frac{1}{q}-\frac{1}{p}}\big(\frac{t}{q}\big)^{\frac{1}{q}}(p')^{\frac{1}{q}+(n-1)}[w]^{\frac{1}{p}+
\frac{t'-1}{p(p-1)}(n-1)}_{A_{p,\cR}}[\sigma]_{A_{\infty,\cR}}^{\frac{1}{q}} & \text{if $0<q\leq p$.}
\end{cases}
\end{align*}
Setting $t=2^{n+1}\log(e+[w]_{A_{p,\cR}})+1$ so that 
\begin{equation*}
t'-1=\frac{1}{2^{n+1}\log(e+[w]_{A_{p,\cR}})},
\end{equation*}
the result now follows from the observation that $
[w]_{A_{p,\cR}}^{\frac{1}{\log(e+[w]_{A_{p,\cR}})}}\leq e$. 
\end{proof}

For the proof of Corollary~\ref{thm:maindual} we need an auxiliary result.
\begin{lemma}\label{lem:maindual}
Let $w$ be a weight and let $p\in(0,\infty)$. Then the operator
\[
M^{w,c}_pf(x):=\sup_{r>0}\left(\frac{1}{w(Q_r(x))}\int_{Q_r(x)} |f(y)|^p \, w(y) dy,\right)^{\frac{1}{p}}
\]
where $Q_r(x)$ is the cube centered at $x$ of side-length is $r$, is bounded $L^p(w)\to L^{p,\infty}(w)$ with a dimensional bound, independent of $w$.
\end{lemma}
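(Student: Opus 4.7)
The plan is to reduce to the case $p=1$ by homogenization, and then prove the weak-type $(1,1)$ estimate for the centered maximal operator with respect to the (possibly non-doubling) Radon measure $w\,\d y$ using the Besicovitch covering theorem.

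First I would set $g:=|f|^p$ and observe that
\[
\bigl(M^{w,c}_p f(x)\bigr)^p
=\sup_{r>0}\frac{1}{w(Q_r(x))}\int_{Q_r(x)} g(y)\,w(y)\,\d y
=M^{w,c}_1 g(x).
\]
Consequently $\{x : M^{w,c}_p f(x)>\lambda\}=\{x : M^{w,c}_1 g(x)>\lambda^p\}$ and $\|g\|_{L^1(w)}=\|f\|_{L^p(w)}^p$, so the desired weak-type $(p,p)$ bound for $M^{w,c}_p$ reduces directly to a weak-type $(1,1)$ bound for $M^{w,c}_1$ on the measure space $(\mathbb{R}^n, w\,\d y)$ with a dimensional constant.

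Next I would establish this weak-type $(1,1)$ inequality. Given $\lambda>0$, let $E_\lambda:=\{M^{w,c}_1 g>\lambda\}$. For each $x\in E_\lambda$ choose a cube $Q_x=Q_{r_x}(x)$ centered at $x$ such that
\[
\int_{Q_x} g(y)\,w(y)\,\d y>\lambda\, w(Q_x).
\]
By a standard truncation argument one may assume that $E_\lambda$ is bounded and that the radii $r_x$ are uniformly bounded (the final estimate then follows by monotone convergence). The Besicovitch covering theorem produces a countable subcollection $\{Q_{x_j}\}$ covering $E_\lambda$ whose multiplicity is bounded by a dimensional constant $B_n$. Hence
\[
w(E_\lambda)\leq\sum_j w(Q_{x_j})
<\sum_j \frac{1}{\lambda}\int_{Q_{x_j}} g\,w\,\d y
\leq \frac{B_n}{\lambda}\int_{\mathbb{R}^n} g(y)\,w(y)\,\d y,
\]
where the multiplicity bound was used in the last step. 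This yields $\|M^{w,c}_1\|_{L^1(w)\to L^{1,\infty}(w)}\leq B_n$.

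Combining the two steps gives
\[
\lambda\,w\{M^{w,c}_pf>\lambda\}^{1/p}
=\bigl(\lambda^p\, w\{M^{w,c}_1 g>\lambda^p\}\bigr)^{1/p}
\leq B_n^{1/p}\,\|g\|_{L^1(w)}^{1/p}
=B_n^{1/p}\,\|f\|_{L^p(w)},
\]
as required. The only genuine subtlety is the use of centeredness, which is essential because the Besicovitch covering theorem applies to families of cubes centered at points of the set; the uncentered analogue can fail for general non-doubling Radon measures in dimension $n\geq 2$, so the centered nature of $M^{w,c}_p$ is what allows a dimensional constant independent of $w$.
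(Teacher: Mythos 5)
Your proposal is correct and follows essentially the same approach as the paper: reduce to $p=1$ via the identity $M^{w,c}_pf=(M^{w,c}_1(|f|^p))^{1/p}$, and then apply the weak $(1,1)$ bound for the centered maximal operator with respect to the measure $w\,\d y$, obtained from the Besicovitch covering theorem. The paper states this in one line; your write-up simply spells out the details, including the correct observation that centeredness is what makes the Besicovitch argument work for arbitrary weights.
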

\begin{proof}
Since $\|M^{w,c}_pf\|_{L^{p,\infty}(w)}=\|M^w_c(|f|^p)\|_{L^{1,\infty}(w)}^{\frac{1}{p}}$, where $M^c_w$ is the centered maximal operator with respect to the measure $w$, this result follows from the usual argument using the Besicovitch covering theorem.
\end{proof}
\begin{proof}[Proof of Corollary~\ref{thm:maindual}]
Since $M (f)\approx_n \M^c(f)=\M^c(|f|)$, it suffices to prove the result for $\M^c$ instead of $\M$, and with $f\geq 0$.  Writing $\avgint_Q u:={\frac{1}{|Q|}}\int_Qu(x) dx$, we have,
\begin{align*}
\avgint_Q fv &\leq    
\left( \avgint_Q f^{p'}v \right )^{\frac{1}{p'}} 
\left( \avgint_Q v \right )^{\frac{1}{p}} 
\\
&=\left( \frac{1}{w(Q)}\int_Q   \Big(f\frac{v^{\frac{1}{p'}}}{w^{\frac{1}{p'}}}\Big)^{p'}  \,wdx\right )^{\frac{1}{p'}} 
\left( \avgint_Q w \right )^{\frac{1}{p'}} 
\left( \avgint_Q v \right )^{\frac{1}{p}}.
\end{align*}
This implies
\[
\M^c(fv) \leq \M_{p'}^{c,w}\Big( f\frac{v^{\frac{1}{p'}}}{w^{\frac{1}{p'}}}\Big)(\M^cw)^{\frac{1}{p'}}(\M^cv)^{\frac{1}{p}}\leq[w]^{\frac{1}{p'}}_{A_1}[v,w]_{A_1}^{\frac{1}{p}}\M_{p'}^{c,w}\Big( f\frac{v^{\frac{1}{p'}}}{w^{\frac{1}{p'}}}\Big)w,
\]
so that by Lemma~\ref{lem:maindual} we have
\begin{align*}
\|Tf\|_{L^{p',\infty}(w)}
&\leq c_n[w]^{\frac{1}{p'}}_{A_1}[v,w]_{A_1}^{\frac{1}{p}}\,
\Big\|\M_{p'}^{c,w}\Big( f\frac{v^{\frac{1}{p'}}}{w^{\frac{1}{p'}}}\Big)\Big\|_{L^{p',\infty}(w)}\\
&\lesssim_n[w]^{\frac{1}{p'}}_{A_1}[v,w]_{A_1}^{\frac{1}{p}}\|f\|_{L^{p'}(v)}.
\end{align*}
Replacing $p'$ by $p'/r$, we obtain
\[
\|T\|^{\frac{1}{r}}_{L^{p'/r}(v)\to L^{p'/r,\infty}(w)}\lesssim_n[w]^{\frac{1}{p'}}_{A_1}[v,w]_{A_1}^{\frac{1}{r}-\frac{1}{p'}}.
\]
Setting
\[
\frac{1}{r}=\frac{1}{p'}+\frac{1}{2p\log(e+[v,w]_{A_1})},
\]
we have $[v,w]_{A_1}^{\frac{1}{r}-\frac{1}{p'}}\leq e$ so that $\|T\|^{\frac{1}{r}}_{L^{p'/r}(v)\to L^{p'/r,\infty}(w)}\lesssim_n[w]^{\frac{1}{p'}}_{A_1}$, and
\[
r'=p\frac{2\log(e+[v,w]_{A_1})}{2\log(e+[v,w]_{A_1})-1}\leq 2p\log(e+[v,w]_{A_1}).
\]
Since $T1\leq[v,w]_{A_1}1$, the result now follows from applying Theorem~\ref{thm:main} with $A=[v,w]_{A_1}$.
\end{proof}

\section{Optimality}\label{optimality}

\begin{proof} [Proof of Corollary \ref{MixedApAinftyLpq} Part 2): Sharpness]
\par	
We are going to prove that our estimate is sharp in the exponent of the weight constants. For this, we will distinguish two cases $p\le q$ and $q\le p$. To simplify notation we are going to give our examples in one dimension, the corresponding $n$-dimensional examples being analogous.

\textit{The case $p\le q$.} We find that Buckley's example \cite{buckley} also works in our case. Define the weights $w_{\delta}(x)=|x|^{(p-1)(1-\delta)}$ for $0<\delta<1$. Then the weights $w_{\delta}$ belong to the $A_p$ class with $[w_{\delta}]_{A_p}\approx\delta^{1-p}$ for every $1<p<\infty$. We also have that $\sigma_\delta(x)=|x|^{\delta-1}$ satisfies $[\sigma_\delta]_{A_\infty}\approx \delta^{-1}$.

Take $f_{\delta}(x)=|x|^{-1+\delta}\chi_{[0,1]}(x)$. This is in $L^{p,q}(w_{\delta})$ for $p\le q$,  because it is in $L^p(w_{\delta})$. It is not difficult to check that the functions $f_{\delta}$ satisfy that $\M f(x)\geq (1/\delta)f(x)$ and so by combining this with Corollary \ref{MixedApAinftyLpq} we get
\[\delta^{-p}\leq \frac{\|\M f\|^p_{L^{p,q}(w_{\delta})}}{\|f\|^p_{L^{p,q}(w_{\delta})}}\lesssim [w_{\delta}]_{A_p}[\sigma_\delta]_{A_{\infty}}\approx \delta^{-p},
\]
which proves the claim. 

\textit{The case $q\le p$.}  For $0<\delta<1$, take $I=[-1,1]$ and 
\begin{equation*}
w_{\delta}(x):= \delta^{p-1}\chi_{I}(x)+|x|^{(1-\delta)(p-1)}\chi_{I^c}(x).
\end{equation*}
Then we will show that $[w_{\delta}]_{A_p}\approx \delta^{-(p-1)}$. We first show the $\geq$ part. Indeed,
taking the average in the interval $[0,2]$ we have
\begin{equation*}
[w_{\delta}]_{A_p}
\ge \frac{1}{2^p} \Big(\int_1^2 x^{(1-\delta)(p-1)}  \d x\Big)\Big(\int_0^1 \frac{1}{\delta}\, \d x\Big)^{p-1} \gtrsim \delta^{-(p-1)},
\end{equation*}
using that $x^{(1-\delta)(p-1)}\ge 1$ for $x\in (1,2)$.

On the other hand, let us take $Q$ to be an interval.  Observe that since the function $w_{\delta}(x)$ is even, we can consider the intervals $Q\subset (0,\infty)$.  Moreover, since $w_{\delta}(x)$ is constant in $I$ and the $A_p$-constant of the weight $|x|^{(p-1)(1-\delta)}$ is similar to $\delta^{1-p}$ it is enough to check  the conditions for intervals $Q\subset (0,\infty)$ such that  $1 \in {Q}$. In these conditions we will consider two cases: 

\begin{itemize}
	\item Let $|Q|\le 1$. In this case, $Q\subset (0,2)$. Then we have that $w_{\delta}(x)\lesssim 1$ and $w^{1-p'}_{\delta}(x)\lesssim 1/\delta$, so  
	  
	\[
	\Big(\avgint_{Q} w_{\delta}(x) \, \d x\Big)\Big(\avgint_{Q} w^{1-p'}_{\delta}(x) \, \d x\Big)^{p-1}\lesssim \delta^{-(p-1)}.\]
	\item Let $|Q|>1$. For this case, we are going to use that $Q\subset (0, 2|Q|)$. Then, we have  
	
	$$\avgint_{Q} w_{\delta}(x) \, \d x \le \frac{1}{|Q|}\Big(\delta^{p-1} + \int_1^{2|Q|} x^{(1-\delta)(p-1)}\, \d x \Big) \lesssim |Q|^{(1-\delta)(p-1)},$$

and  	$$\Big(\avgint_{Q} w^{1-p'}_{\delta}(x) \, \d x\Big)^{p-1}\le \frac{1}{|Q|^{p-1}} \Big(\frac{1}{\delta} +\int_{1}^{2|Q|} x^{\delta-1}\, \d x\Big)^{p-1}\lesssim \frac{1}{\delta^{(p-1)}} |Q|^{(\delta-1)(p-1)}. $$

\end{itemize} 
Combining previous estimates, we conclude that $[w_{\delta}]_{A_p}\approx \delta^{-(p-1)}$ for $0<\delta<1$. 

Now take $f=\chi_{[0,1]}$. Then, we have
\begin{equation*}
\aligned
\|f\|_{L^{p,q}(w_{\delta})}&=p^{1/q}\Big(\int_0^{\infty}t^q w_{\delta}(\{x\in \R: f(x)>t\})^{q/p}\, \dt\Big)^{1/q}\\
&= p^{1/q} \Big(\int_0^1 t^q\, w_\delta(0,1)^{q/p}\dt\Big)^{1/q}=\left(\frac pq\right)^{1/q}\delta^{(p-1)/p}\approx \delta^{1/p'}.
\endaligned
\end{equation*}
We find a lower bound for the norm of $\M f$ using that $\M f(x)=\frac1x$ for $x>1$.
\[\begin{split}
\|\M f\|_{L^{p,q}(w_{\delta})}&= p^{1/q}\Big(\int_0^{\infty}t^qw_{\delta}(\{x\in\R: \M f(x)>t\})^{q/p}\, \dt\Big)^{1/q}\\&\ge p^{1/q}\Big(\int_0^1 t^{q}w_{\delta}(\{x>1: \frac{1}{x}>t\})\, \dt\Big)^{1/q}\\
&=p^{1/q}\Big(\int_0^1 t^qw_{\delta}([1,t^{-1}])^{q/p}\, \dt\Big)^{1/q}\\
&=p^{1/q}\Big(\int_0^1t^q\Big[\int_1^{t^{-1}}x^{(p-1)(1-\delta)}\, \d x\Big]^{q/p}\, \dt\Big)^{1/q}\\
	&\ge p^{1/q}\Big(\int_0^1 t^q\Big[\frac{1}{p}t^{-(p-p\delta+\delta)}\Big]^{q/p}\, \dt\Big)^{1/q}\\
&=p^{1/q}p^{-1/p}\Big(\int_0^1 t^{q\delta(1-1/p)}\, \dt\Big)^{1/q}\approx \Big(\frac1{\delta}\Big)^{1/q}.
\end{split}\]

Putting together all the estimates that we obtained, taking into account that 
$[\sigma_{\delta}]_{A_{\infty}}\le [w_{\delta}]^{1/(p-1)}_{A_p} \sim {\delta}^{-1}$, and applying the first part of Corollary \ref{MixedApAinftyLpq}, we have 
\[\Big(\frac1{\delta}\Big)^{1/q} \,\lesssim \|\M f\|_{L^{p,q}(w_{\delta})}\le [w_{\delta}]^{1/p}_{A_p}[\sigma_{\delta}]_{A_{\infty}}^{1/ q}\|f\|_{L^{p,q}(w_{\delta})}\lesssim \delta^{-1/p'}\delta^{-\frac1q}\delta^{1/p'}=\delta^{-1/q},\]
which proves the claim in the case $q\le p$. 
\end{proof}

We finish the paper by showing that \eqref{DoubleAinfty} is false. In one dimension, take $w(x)=|x|^{(p-1)(1-\delta)}$, with small $\delta>0$. Then, 
\begin{equation*}
[w]_{A_p}\sim \delta^{1-p}, \quad [w]_{A_\infty}\sim 1, \quad [\sigma]_{A_\infty}\sim \delta^{-1}.
\end{equation*}

Since $\|\M\|_{L^p(w)}\sim \delta^{-1}$, in \eqref{HLLpAinfty} one cannot replace   $[w]_{A_p}$ by $[w]_{A_\infty}$. In fact even $[w]_{A_q}$ with $q>p$ cannot be used.

\bibliographystyle{amsalpha}

\end{document}